\newcommand{\N}{\mathbb{N}}
\newcommand{\Q}{\mathbb{Q}}\newcommand{\Z}{\mathbb{Z}}\newcommand{\F}{\mathbb{F}}
\newcommand{\map}[3]{ #1 : #2 \longrightarrow #3 }
\newcommand{\mapl}[5]{ #1 : #2 \longrightarrow #3 : #4 \longmapsto #5 }
\newtheorem{theorem}{Theorem}[section]
\newtheorem{proposition}[theorem]{Proposition}
\newtheorem{corollary}[theorem]{Corollary}
\newtheorem*{corollary*}{Corollary}
\newtheorem*{fakecorollary}{Corollary}
\newtheorem{lemma}[theorem]{Lemma}
\newtheorem*{theorem*}{Theorem}
\newtheorem*{faketheorem}{Theorem}
\newtheorem*{proposition*}{Proposition}
\newtheorem*{lemma*}{Lemma}
\newtheorem*{problem*}{Problem}
\newtheorem{openproblem}[theorem]{Open Problem}
\newtheorem*{observation*}{Observation}
\theoremstyle{definition} \newtheorem{remark}[theorem]{Remark}
\theoremstyle{definition} \newtheorem{example}[theorem]{Example}
\theoremstyle{definition}\newtheorem{definition}[theorem]{Definition}
\title{Arithmetically-free group-gradings of Lie algebras: I}
\author{Wolfgang Alexander Moens\thanks{This research was supported by the Erwin Schr\"odinger Junior Fellowship (Grant XXXX), and the Austrian Science Foundation FWF (Grant J3371-N25).}}
\date{\today}
\begin{document}

\maketitle

\abstract{A Lie algebra $L$ is known to be nilpotent if it admits a grading by $(\Z_p,+)$ %or $(\Z^m,+)$ 
with support $X$ not containing $0$. It is also known that the class of $L$ can be bounded by some explicit function of $|X|$. We generalise this and other classical results to gradings of Lie algebras by arbitrary groups with arithmetically-free support. We then apply these results to automorphisms of groups satisfying an identity.}

\section{Introduction}

A well-known theorem of Thompson states that a finite group $(G,\cdot)$ is nilpotent if it admits an automorphism that has prime order $p$ and fixes only the neutral element, \cite{Thompson}. Kegel later extended Thompson's result by showing that a finite group $(G,\cdot)$ is nilpotent if it admits a splitting automorphism $f$ of prime order $p$: $$1_G = x \cdot f(x) \cdot f^2(x) \cdots f^{p-1}(x),$$ for all $x \in G$, \cite{Kegel}. Either result gives a positive answer to Frobenius' conjecture on transitive permutation groups. Thompson's result was also improved by Higman, who showed that $G$ must even be nilpotent of $p$-bounded class, \cite{Higman}. He did this by reducing the nilpotency of $G$ to the nilpotency of a Lie ring $L$ (naturally associated with $G$) admitting a regular $(\Z_p,+)$-grading. Higman proved:

%A well-known theorem of Thompson states that a finite group $(G,\cdot)$ is nilpotent if it admits an automorphism that has prime order $p$ and fixes only the neutral element, \cite{Thompson}. Such an automorphism $f$ \emph{splits}, which means that it satisfies the identity $$1_G = x \cdot f(x) \cdot f^2(x) \cdots f^{p-1}(x),$$ for all $x \in G$. Thompson's theorem implies, among other things, Frobenius' conjecture on transitive permutation groups. Higman showed that the conclusion of the theorem can be strengthened: $G$ must be nilpotent of $p$-bounded class, \cite{Higman}. He did this by reducing the nilpotency of $G$ to the nilpotency of a Lie ring $L$ (naturally associated with $G$) admitting a regular $(\Z_p,+)$-grading. Higman proved:

\begin{theorem} To each prime $p$ corresponds a (minimal) integer $h(p)$ such that if a Lie ring $L$ has a grading $\bigoplus_{x \in \Z_p} L_{x}$ by $(\Z_p,+)$ such that $L_{\overline{0}} = \{ 0 \}$, then $L$ is nilpotent of class at most $h(p)$. \label{Higgie} \end{theorem}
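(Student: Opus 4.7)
The plan is to prove, for a well-chosen function $h(p)$, that every left-normed multilinear commutator $[a_0, a_1, \ldots, a_n]$ of homogeneous elements of $L$ of length $n \geq h(p)$ vanishes; since every Lie commutator expands into a $\Z$-linear combination of left-normed commutators on elements of $L$, and since the grading lets us assume each $a_i$ is homogeneous, this will yield $\gamma_{h(p)+1}(L) = 0$. The starting observation is the grading itself: if $a_i \in L_{\alpha_i}$ with $\alpha_i \in \Z_p \setminus \{\bar{0}\}$, then $[a_0, a_1, \ldots, a_n] \in L_{\alpha_0 + \alpha_1 + \cdots + \alpha_n}$, and as soon as this weight is congruent to $0$ modulo $p$, the commutator lies in $L_{\bar{0}} = 0$ and so vanishes. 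The same applies to any nested sub-bracket whose weights have been arranged to sum to zero.

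The strategy is then to use the Jacobi identity, in its rewriting form $[u,v,w] = [u,w,v] + [u,[v,w]]$, to produce such a vanishing sub-bracket from any sufficiently long left-normed commutator. A first combinatorial input is the pigeonhole principle applied to the partial sums $s_i := \alpha_1 + \cdots + \alpha_i \in \Z_p$: as soon as there are more than $p$ of them, two coincide, which exhibits a consecutive block $\alpha_{i+1}, \ldots, \alpha_j$ of weights summing to $\bar{0}$. If this block is already placed conveniently inside the commutator, the induction terminates; in general one must use repeated Jacobi swaps to migrate the block into a single nested bracket, and each swap creates new shorter left-normed commutators with a pair of weights replaced by their sum. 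The argument is then closed by a double induction: an outer induction on the length of the commutator, and an inner induction on the length of the shortest consecutive zero-sum sub-block of the weight sequence; the base cases exploit the vanishing of $L_{\bar{0}}$ both for the whole bracket and for its internal sub-brackets.

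The main obstacle is quantitative. Each time a Jacobi swap brings a weight closer to the chosen sub-block, it produces error terms in which the sequence of weights has strictly fewer entries but may contain longer or differently structured consecutive zero-sum sub-blocks; controlling the interplay between these two complexity measures is what forces a bound $h(p)$ that grows far faster than polynomially in $p$. Even establishing the \emph{qualitative} finiteness of $h(p)$ therefore requires a delicate balancing of the two inductions, and any explicit function will be of iterated-exponential type. I would first push the qualitative version through to confirm finiteness, and only afterwards track the constants so as to record an explicit Higman-type bound.
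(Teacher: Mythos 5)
Your two starting observations are sound: pigeonhole on the partial sums of the weights does produce, in any left-normed bracket of more than $p$ homogeneous entries, either a vanishing initial segment or a consecutive interior block of weights summing to $\overline{0}$; and the Jacobi identity in the form $[u,v,w]=[u,w,v]+[u,[v,w]]$ is the right tool for trying to collect that block into a nested sub-bracket lying in $L_{\overline{0}}=\{0\}$. But the argument does not close as written, and it fails exactly at the point you flag yourself. Each collection step produces, besides the reordered term, an error term in which two adjacent entries are merged into a single homogeneous entry of weight equal to their sum. That error term is \emph{shorter}, so your outer induction on length runs in the wrong direction: you are trying to prove that long brackets vanish, so you cannot invoke the hypothesis on shorter ones. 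Meanwhile the reordered term has the same length but a weight sequence whose ``shortest consecutive zero-sum sub-block'' can grow, move, or disappear after a transposition, so the inner induction measure is not well-founded. You do not exhibit a termination argument for the double induction, and none is known; the proposal is a plan with the essential step left open.

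The missing idea is the reduction through \emph{solvability}. The paper does not prove Theorem \ref{Higgie} directly either --- it cites Higman \cite{Higman}, with the explicit bound due to Kreknin--Kostrikin \cite{Kreknin,KrekninKostrikin} --- but the paper's own generalisation (Theorem \ref{MainTheoremGrading} via Theorem \ref{TheoremShalev} and Proposition \ref{PropositionPermutationContraction}) shows the standard way around your obstacle. One first proves $L^{(2^{p-1}-1)}=\{0\}$ (Kreknin's theorem; in the paper, the good-ordering argument of Theorem \ref{TheoremShalev}), by an induction on derived length that never requires the Jacobi error terms to vanish. One then proves the inclusion $L^{1+\delta(X)^{s}}\subseteq L^{(s)}$ of Proposition \ref{PropositionPermutationContraction}; in \emph{that} argument the error terms $a\cdot z$ with $z\in A^{(1)}\cdot A^{(1)}$ are simply absorbed into the target $(A^{(1)})^{l+1}$ rather than having to be shown zero, which is precisely what dissolves the ``interplay of complexity measures'' you identify as the obstruction. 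If you want to salvage your approach, aim it at the weaker statement $L^{2+l\delta}\subseteq (L^{(1)})^{l+1}$ and feed in solvability as a separate input; aiming the Jacobi rearrangement directly at $\gamma_{N}(L)=\{0\}$ is not known to terminate.
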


%\begin{faketheorem}[A] To each prime $p$ corresponds a (minimal) integer $h(p)$ such that if a Lie ring $L$ has a grading $\bigoplus_{x \in \Z_p} L_{x}$ by $(\Z_p,+)$ such that $L_{\overline{0}} = \{ 0 \}$, then $L$ is nilpotent of class at most $h(p)$. \label{Higgie} \end{faketheorem}

It was not immediately clear how big the nilpotency class can be. (The minimal upper bound is conjectured to be $(p^2-1)/4$, for all primes $p >2$.) In \cite{Kreknin,KrekninKostrikin}, Kreknin and Kostrikin proved the explicit upper bound $$h(p) \leq \frac{(p-1)^{2^{p-1}-1}-1}{p-2}.$$ Khukhro then showed that the class of $L$ is bounded by a similar function of $|X|$, the cardinality of the support, \cite{KhukhroSupport}. Analogous results were obtained for Lie algebras that are graded by finite subsets of $(\Z^m,+) \setminus \{ 0 \}$. \newline

In order to generalise some of these results, we introduce the following definition. A finite subset $X$ of an abelian group $(G,+)$ is said to be \emph{arithmetically-free}, iff $X$ contains no full arithmetic progression $x,x+y,x+2y, \ldots$ with increment $y \in X$. A finite subset $X$ of an arbitrary group $(G,\cdot)$ is arithmetically-free, iff every abelian subset is arithmetically-free. So arithmetically-free sets can be thought of as natural generalisations of finite subsets of $\Z_p$ and $\Z^m$ not containing the neutral element. \newline

In \cite{MoensPreprint} we construct a map $\map{H}{\N}{\N}$ and prove the following generalisation of Higman's theorem:

\begin{theorem} Consider a Lie algebra $L$ that is graded by a group $G$. If the support $X$ is arithmetically-free, then $L$ is nilpotent of class at most $H(|X|)$. \end{theorem}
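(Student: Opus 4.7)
The plan is to extend the strategy of Higman, Kreknin--Kostrikin and Khukhro for cyclic and integer gradings to the arithmetically-free setting, in two stages: first a reduction to gradings by abelian groups, and then a Kreknin-style recursive argument controlled by the combinatorics of the support.

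First, since all homogeneous components indexed outside $\langle X \rangle$ vanish, I would assume without loss of generality that $G = \langle X \rangle$, so $G$ is finitely generated. To pass from a general group $G$ to an abelian one, I would exploit the definition itself: every abelian subset of $X$ is arithmetically-free in the abelian sense. The strategy is to argue that any would-be counterexample to the nilpotency-class bound is detected on a subalgebra generated by a commuting family of homogeneous components, hence on a Lie subalgebra graded by an abelian subgroup of $G$ whose support is still arithmetically-free. Pinning down the right commuting subfamily of $X$, and ensuring it is large enough to dominate the nilpotency class, is one of the subtler moves.

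Second, in the abelian case, I would adapt Kreknin's recursion. The combinatorial input is arithmetic-freeness itself: for every $y \in X$ and every $x \in X$, the progression $x, x+y, x+2y, \ldots$ must leave $X$ at some step. I would define a decreasing chain of ideals $D_k(L)$ in the style of Kreknin, each generated by Lie products of homogeneous elements whose grades form arithmetic progressions, and use arithmetic-freeness to show that $D_k(L) = 0$ for some $k$ depending only on $|X|$. This bounds the derived length of $L$ by a function of $|X|$ alone. Promoting bounded derived length to bounded nilpotency class is the familiar final step: an iterated-commutator argument exploiting the absence of the neutral element from $X$, yielding enough rigidity on each derived factor for Higman-type reasoning to take over.

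The hard part will be the explicit, $|X|$-bounded combinatorial lemma that forces the Kreknin recursion to terminate in a number of steps independent of the ambient group $G$. In the $\mathbb{Z}_p$-case this is essentially linear algebra over a finite field; in the general arithmetically-free setting one seems to need a Ramsey- or van der Waerden-type statement bounding how many shifts of an arithmetically-free configuration already force a full arithmetic progression. Extracting such a bound, and then propagating it through the non-abelian reduction without losing control of $|X|$, is where I expect the main obstacle to lie.
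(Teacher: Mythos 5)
You should first be aware that this paper does not actually prove the statement you were given: the theorem is quoted from the companion preprint (\emph{Arithmetically-free group-gradings of Lie-algebras: II}), and the present paper only proves the weaker Theorem \ref{MainTheoremGrading}, which carries the \emph{additional} hypothesis that the support admits a good-ordering. That weaker theorem is proved exactly along the lines you sketch: a reduction to abelian grading subgroups (Lemma \ref{ReductionAbelian}, which shows $L^n$ and $L^{(n)}$ are spanned by the corresponding terms of the subalgebras $L_A$ for abelian $A \leq G$ --- this is your ``commuting subfamily'' step, and it works cleanly without any Ramsey-type difficulty), then a Kreknin--Shalev solvability bound, then a conversion of derived length into nilpotency class via Proposition \ref{PropositionPermutationContraction}. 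For that last step, note that ``absence of the neutral element'' is not the right combinatorial input: what is actually used is the quantitative invariant $\delta(X)$ of Proposition \ref{PropositionQuantitative}, i.e.\ that for \emph{any} $a, b_1, \ldots, b_{|X|} \in X$ some subset sum $a + \sum_{i \in I} b_i$ escapes $X$, which follows from the growth-or-capture Lemma \ref{LemmaGrowth} for SubSum sets; your single-increment formulation ($x, x+y, x+2y, \ldots$ leaves $X$) only yields the weaker invariant $\nu(X)$ and does not by itself let one permute a long homogeneous word so that some initial segment's grade leaves the support.

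The genuine gap is in your second stage. The Kreknin recursion, in every form present in this paper (Theorem \ref{TheoremShalev} of Borel--Mostow, Kreknin, and Shalev), requires a \emph{good-ordering} on the support: the induction peels off the maximal element of the ordering, and without the no-interleaving condition $\neg(x < x+y < y)$ the recursion does not close up. The paper explicitly records, as an Open Problem, whether every arithmetically-free subset of an abelian group admits a good-ordering, and notes that a positive answer would be exactly what is needed to upgrade Theorem \ref{MainTheoremGrading} to the unconditional statement you were asked to prove. So your plan, as written, proves only the good-ordered case; the unconditional theorem requires a different mechanism (which is presumably why the map $H$ of the companion paper is defined by a fast-growing recursion quite unlike the $2^{|X|-1}-1$ bound of the ordered case). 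You correctly sense that the termination of the recursion is ``where the main obstacle lies,'' but the obstacle is not merely extracting a van der Waerden-type bound --- it is that the recursion you propose cannot be set up at all without an ordering whose existence is open.
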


%\begin{faketheorem}[B] Consider a Lie algebra $L$ that is graded by a group $G$. If the support $X$ is arithmetically-free, then $L$ is nilpotent of class at most $H(|X|)$. \end{faketheorem}

We note that there are no restrictions on the base field, the dimension of the algebra, or the structure of the grading group. %The theorem can also be formulated in terms of walks in the Cayley-graph $\operatorname{Cay}(G,X)$. Higman's proof of theorem $1$, for example, corresponds with the statement: every walk in $\Z_p^\times \subseteq \operatorname{Cay}(\Z_p,\Z_p^\times)$ of length $h(p)$ contains a regular cycle. \newline
In \cite{MoensPreprint}, it is shown that the hypothesis on $X$ is necessary: %, (so that a finite support $X$ implies the nilpotency of the algebra, if and only if $X$ is arithmetically-free):

\begin{theorem} If a finite set $X$ is \emph{not} arithmetically-free, then it must support the grading of a non-nilpotent Lie algebra. \end{theorem}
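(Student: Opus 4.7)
The plan is to construct, for each finite $X \subseteq G$ that fails to be arithmetically-free, an explicit non-nilpotent $G$-graded Lie algebra whose support equals $X$. First I would unpack the definition: non-arithmetical-freeness of $X$ forces some pairwise-commuting subset of $X$ to contain a full arithmetic progression, which produces elements $x, y \in X$ with $xy = yx$ in $G$, an integer $n := \operatorname{ord}(y) < \infty$, and the inclusion $\{xy^k : 0 \leq k < n\} \subseteq X$.

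I would then build the Lie algebra $L$ over any fixed field as follows. Set $S := \{y\} \cup \{xy^k : 0 \leq k < n\} \subseteq X$ and take the basis $\{e\} \cup \{f_0, f_1, \ldots, f_{n-1}\} \cup \{z_g : g \in X \setminus S\}$, graded by $\deg(e) := y$, $\deg(f_k) := xy^k$, and $\deg(z_g) := g$, so that the support is exactly $X$ by construction. The bracket is defined by $[e, f_k] := f_{(k+1) \bmod n}$, with every other basis bracket set to zero. Antisymmetry is immediate, the Jacobi identity holds trivially because every double bracket contains an inner zero, and the grading axiom reduces to the identity $y \cdot xy^k = xy^{k+1}$, which is valid precisely because $x$ and $y$ commute.

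For non-nilpotency, I would compute $[L, L] = \operatorname{span}\{f_0, \ldots, f_{n-1}\}$ (since $k \mapsto (k+1) \bmod n$ is a bijection), and then $[L, [L, L]] = \operatorname{span}\{[e, f_k] : k\}$ again equals $\operatorname{span}\{f_0, \ldots, f_{n-1}\}$; thus the lower central series of $L$ stabilises at this nonzero subspace and $L$ is not nilpotent. The only subtle point in the argument is the first step, namely extracting the commuting pair $(x, y)$ with the entire orbit $\{xy^k\}_k$ contained in $X$, which makes essential use of the clause of the definition that speaks of \emph{abelian} subsets; the commutativity $xy = yx$ is in turn exactly what is needed for the bracket $[e, f_k] = f_{(k+1) \bmod n}$ to respect the $G$-grading. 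Once this is secured, the construction and verifications are entirely routine, and the central generators $z_g$ simply ensure that the support equals $X$ rather than a proper subset.
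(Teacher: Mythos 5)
Your construction is correct, but note that the paper itself offers no proof of this statement to compare against: it is quoted as a result of the companion preprint (Part II, \cite{MoensPreprint}). Judged on its own terms, your argument works. The extraction step is exactly what failure of arithmetic-freeness provides: some abelian subgroup $H$ has $X_H\cap P_H(X_H)\neq\emptyset$, giving commuting $x,y\in X$ with $x\langle y\rangle\subseteq X$, and finiteness of $X$ forces $n:=\operatorname{ord}(y)<\infty$ (the degenerate case $y=1_G$, $n=1$ is also covered by your construction, where $[e,f_0]=f_0$). The grading axiom holds because $y\cdot xy^{k}=xy^{k+1}$, the support is exactly $X$ thanks to the spectator generators $z_g$, and the lower central series stabilises at $\operatorname{span}\{f_0,\ldots,f_{n-1}\}\neq\{0\}$, so $L$ is not nilpotent. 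The one loose point is your justification of the Jacobi identity: it is not true that every double bracket has a vanishing inner factor --- for instance $[[e,f_k],e]=[f_{k+1},e]=-f_{(k+2)\bmod n}$. The correct (and still routine) argument is that the Jacobiator of any antisymmetric bilinear bracket vanishes identically on triples with a repeated argument, so it suffices to check triples of \emph{distinct} basis vectors; there at most one entry is $e$, and in each of the three cyclic terms either the inner or the outer bracket pairs two non-$e$ basis vectors and hence vanishes. With that repair the proof is complete.
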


%\begin{faketheorem}[C] If a finite set $X$ is \emph{not} arithmetically-free, then it must support the grading of a non-nilpotent Lie algebra. \end{faketheorem}

%We conclude that a finite support $X$ implies the nilpotency of the algebra, if and only if $X$ is arithmetically-free. %\newline
The generalised Higman map $H$ is defined by an explicit recursion, and it grows very quickly. But if we suppose that the support also has a good-ordering (in the sense of Shalev), then we obtain a better bound. We shall prove:

\begin{faketheorem}[\ref{MainTheoremGrading}] Consider a Lie algebra $L$ that is graded by a group $G$. If the support $X$ is arithmetically-free and if $X$ has a good-ordering, then $L$ is nilpotent of class at most $1 + |X| + |X|^2 + \cdots + |X|^{2^{|X|-1}-2}$. \end{faketheorem}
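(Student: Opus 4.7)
The plan is to combine two complementary ingredients: an Engel-type bound on individual $\ad$-actions, coming from arithmetic-freeness alone, and a Kreknin-type solvability bound on the derived length of $L$, coming from the good-ordering. The geometric sum in the conclusion then falls out of a standard algebraic formula linking Engel index and derived length.

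For the Engel-type ingredient, I would fix homogeneous $\ell \in L_x$ and $m \in L_y$ with $x,y \in X$, and iterate the grading law to obtain $\ad(\ell)^k(m) \in L_{y \cdot x^k}$ for all $k \geq 0$. The elements $\{y \cdot x^k : k \geq 0\}$ lie in the abelian subgroup $\langle x, y \rangle \leq G$, where they form a full arithmetic progression with starting point $y \in X$ and increment $x \in X$. The definition of arithmetic-freeness, applied to this abelian subset, forbids the whole progression from sitting inside $X$; since $X$ is finite there must exist $k \leq |X|$ with $y\cdot x^k \notin X$, so $L_{y \cdot x^k} = \{0\}$. Linearity then gives $\ad(\ell)^{|X|} = 0$ on all of $L$, so every homogeneous element acts $\ad$-nilpotently with index at most $|X|$.

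For the Kreknin-type ingredient, I would exploit the good-ordering to run a doubling recursion on $|X|$. Writing the good-ordering as $x_1 < x_2 < \cdots < x_n$ (where $n = |X|$) and peeling off an extremal element, one reduces to the subalgebra generated by the remaining homogeneous components, whose support inherits both arithmetic-freeness and the good-ordering. A Kreknin-style doubling argument on iterated derived subalgebras then relates the derived length of $L$ to that of the reduced algebra, approximately doubling at each stage; induction on $|X|$ yields derived length at most $2^{|X|-1}-1$. The good-ordering is essential here: without it the peeled support need not again be good-ordered, and the recursion fails to close.

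Combining the two ingredients is a standard algebraic step: if $L$ is solvable of derived length $d$ and every homogeneous element acts $\ad$-nilpotently with index at most $n$, then one shows by induction on $i$ that $\gamma_{1 + n + \cdots + n^{i-1}}(L) \subseteq L^{(i)}$, yielding nilpotency class at most $1 + n + n^2 + \cdots + n^{d-1}$. Substituting $n = |X|$ and $d = 2^{|X|-1}-1$ reproduces the stated bound. The main obstacle will be the Kreknin-type solvability step: in the classical $\Z_p$-graded case, Kreknin's original argument builds the key projectors from characters of the cyclic grading group, while here they must be constructed combinatorially from the good-ordering, compatibly with arithmetic-freeness at every layer of the recursion, and terminating cleanly at $2^{|X|-1}-1$ iterations rather than the coarser $2^{|X|}-1$ one would get from a naive adaptation of Kreknin's proof.
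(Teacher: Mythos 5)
Your first two ingredients are individually fine: the Engel bound $\operatorname{ad}_\ell^{|X|}=0$ for homogeneous $\ell$ is exactly the observation the paper makes (and your reduction to the abelian subgroup $\langle x,y\rangle$ is the right way to handle non-abelian $G$), and the solvability bound $2^{|X|-1}-1$ is Theorem \ref{TheoremShalev} (Borel--Mostow; Kreknin; Shalev), which the paper simply cites rather than reproves --- your ``doubling recursion'' sketch is really just a restatement of that theorem, not a proof, but since it is a quotable known result this is forgivable.

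The genuine gap is in your ``standard algebraic step'' combining the two. It is \emph{not} true (and not standard) that solvability of derived length $d$ plus $\operatorname{ad}$-nilpotency of index $n$ for each homogeneous element yields $\gamma_{1+n+\cdots+n^{i-1}}(L)\subseteq L^{(i)}$. Already for $i=2$ you must show that every left-normed word $[u,b_1,\ldots,b_n]$ with $u\in L^{(1)}$ and the $b_j$ homogeneous lies in $L^{(2)}$; the Engel property only kills such a word when the $b_j$ all come from the same component (and even then, for a whole component rather than a single element, one needs a linearisation with a characteristic restriction). When the $b_j$ come from distinct components, single-element Engel gives you nothing. What is actually needed is the stronger combinatorial statement that for all $a,b_1,\ldots,b_{|X|}\in X$ some \emph{partial sum} $a+\sum_{i\in I}b_i$ leaves $X$ --- the invariant $\delta(X)$ of Proposition \ref{PropositionQuantitative} --- and this requires the growth-or-capture Lemma \ref{LemmaGrowth} on SubSums of arbitrary sequences (a Cauchy--Davenport-type argument), not just the constant-sequence case that your Engel bound encodes (which is the weaker invariant $\nu(X)$). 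The paper's Proposition \ref{PropositionPermutationContraction} then uses $\delta(X)\le |X|$ together with the permutation-contraction property to get $L^{1+\delta^0+\cdots+\delta^{s-1}}\subseteq L^{(s)}$. If one tries to run the argument with only the Engel/$\nu(X)$ input, one is forced into the pigeonhole of Proposition \ref{PropositionSetGrading}, which costs an extra factor of $|X|$ per derived step and yields only the weaker bound $|X|^{2^{|X|}-2}$ noted in the remark after the theorem. So your final numerical bound happens to be the right one, but the step that produces it is missing its key lemma.
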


The theorem is a consequence of more general results (proposition \ref{PropositionPermutationContraction}) about group-gradings of algebras satisfying the permutation-contraction property -- a property which is automatically satisfied if the algebra is (anti-)associative, alternative, or Lie. A similar bound can be obtained from a result about set-gradings satisfying an Engel-identity (proposition \ref{PropositionSetGrading}). \newline

The hypothesis of a good-ordering on $X$ is reasonable: it is automatically satisfied if the grading group $G$ is the multiplicative group of a field. %by gradings corresponding with automorphisms. 
Since the eigenspace decomposition of an automorphism is such a grading, we can extend classical results of Borel-Serre \cite{BorelSerre}, Higman, Kreknin-Kostrikin \cite{KrekninKostrikin}, Jacobson \cite{Jacobson}, and Khukhro \cite{KhukhroSupport} in Lie theory:

\begin{fakecorollary}[\ref{CorollaryGrading}] Consider a Lie algebra $L$ that is graded by the multiplicative group $(\mathbb{F}^\times,\cdot)$ of a field $\mathbb{F}$. If the support $X$ is arithmetically-free, then $L$ is nilpotent and its class is at most $1 + |X| + |X|^2 + \cdots + |X|^{2^{|X|-1}-2}$. \end{fakecorollary}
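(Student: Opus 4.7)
The plan is straightforward: this corollary is a specialization of Theorem \ref{MainTheoremGrading}, which already provides the stated bound $1 + |X| + |X|^2 + \cdots + |X|^{2^{|X|-1}-2}$ on the nilpotency class whenever the support $X$ is arithmetically-free and admits a good-ordering (in the sense of Shalev). Since arithmetic-freeness of $X$ is part of the hypothesis here, the only thing I need to verify is that every finite subset of the multiplicative group of a field admits a good-ordering.

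The first step is a reduction to a finitely generated subfield: replace $\bF$ by the subfield $\bF_0 \subseteq \bF$ generated by $X$ over its prime subfield. All products of elements of $X$ that can possibly appear live in $\bF_0^\times$, and the subgroup $\langle X \rangle \leq \bF_0^\times$ is finitely generated abelian. Hence it splits as a direct product of a free abelian group and a finite cyclic torsion subgroup (the roots of unity it contains). On the torsion-free part, I would obtain a total order compatible with the group operation either from an archimedean ordering induced by an embedding $\bF_0 \hookrightarrow \bC$ (in characteristic zero), or from a lexicographic ordering induced by a suitable valuation on $\bF_0$ and an ordering on a transcendence basis (in positive characteristic).

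The main obstacle is torsion: the cyclic group of roots of unity in $\bF_0$ carries no total order compatible with multiplication, so the strategy above cannot be applied to it directly. To overcome this, I would use the arithmetic-freeness of $X$ itself: the intersection of $X$ with the torsion subgroup is a finite arithmetically-free subset of a cyclic group $\bZ_n$, which is rigid enough to admit a good-ordering by an ad hoc construction (essentially, one chooses any total order and checks that the absence of arithmetic progressions with increments in $X$ suffices to rule out the configurations that a good-ordering is designed to exclude).

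Finally, I would combine the two orderings lexicographically, letting the torsion-free component dominate, so that products of elements of $X$ are controlled first by their torsion-free part and only then by their torsion part. The resulting order on $X$ is a good-ordering, and Theorem \ref{MainTheoremGrading} yields the nilpotency and the claimed bound on the class of $L$. The delicate step to watch for is ensuring that the compatibility conditions of Shalev's good-ordering (which involve products $x_{i_1} \cdots x_{i_k}$ lying in $X$) are genuinely inherited from the ambient multiplicative structure of $\bF^\times$ once torsion has been handled; everything else is essentially bookkeeping.
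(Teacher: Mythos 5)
Your overall reduction is exactly the paper's: specialize Theorem \ref{MainTheoremGrading}, so that the only thing left to check is that a finite subset $X$ of $(\F^\times,\cdot)$ admits a good-ordering. But your construction of that good-ordering has a genuine gap at its central step. A total order on (the torsion-free part of) $\langle X\rangle$ that is \emph{compatible with the group operation} is not a good-ordering in Shalev's sense. Written additively: if $<$ is translation-invariant, then $x < x+y < y$ is equivalent to $x<0<y$, which is perfectly possible. Concretely, take $t\in\F^\times$ of infinite order and $X=\{t^{-1},t,t^{2}\}$; this is arithmetically-free (Example \ref{ExampleTorsionFree}), yet under any archimedean/absolute-value ordering one has $t^{-1}<t^{-1}\cdot t^{2}=t<t^{2}$, violating the good-ordering condition. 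So the ordering you propose for the torsion-free part fails on legitimate instances of the corollary. Your treatment of the torsion part is also not salvageable as stated: arithmetic-freeness does not make \emph{every} total order good (e.g.\ $\{1,2,3\}\subseteq\Z_7$ is arithmetically-free, but the order $1<3<2$ has $1<1+2=3<2$), and the lemma in the paper shows the good-orderings of a finite cyclic group are very rigid ($t<2t<\cdots$ or its reverse). Finally, the lexicographic combination with the torsion-free part dominating inherits the first problem.

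The paper's route avoids all of this and is much shorter: since every finite subgroup of $\F^\times$ is cyclic, the finitely generated abelian group $\langle X\rangle$ has cyclic torsion, hence by the Lemma preceding Example \ref{ExampleMultiplicativeField} it embeds abstractly into the circle group $(\R/\Z,+)$ (Shalev's Lemma $2.2$), and the order by representatives in $[0,1)$ is a good-ordering of \emph{any} subset: the sum of two representatives is either $\geq$ both of them (no wrap-around) or $<$ both of them (wrap-around), so $x<x+y<y$ never occurs. Note that this order is deliberately \emph{not} compatible with the group operation --- that incompatibility is precisely what makes it good. If you replace your valuation/archimedean step with this circle-group embedding (the paper has already packaged it as Example \ref{ExampleMultiplicativeField}), the rest of your argument --- arithmetic-freeness plus good-ordering feeding into Theorem \ref{MainTheoremGrading} --- goes through verbatim.
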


The corollary is about graded Lie algebras, but it can be used to describe the structure of groups with automorphisms satisfying an identity. %We recall that the finiteness or the nilpotency of a group can often be deduced from the finiteness or the nilpotency of an associated Lie ring or Lie algebra -- the infinitesimal approximation of the group. %We will apply \emph{Corollary} $1$ to torsion-free groups and to $p$-groups, i
By using standard results in the theory of $p$-adic analytic groups, as in Zel'manov's solution of the restricted Burnside problem, we obtain results of the form:

%\begin{proposition} Consider a finitely-generated, residually-finite, $p$-group $(G,\cdot)$ and an automorphism $f$ of $G$. Let $k,m_1,\ldots,m_k,n_1,\ldots,n_k$ be integers such that for all $x \in G$, we have the identity\begin{equation} 1_G = f^{m_1}(x^{n_1}) \cdot f^{m_2}(x^{n_2}) \cdots f^{m_k}(x^{n_k}). \tag{$\ast$} \end{equation} If the roots of the Laurent-polynomial $$r_{\F_p}(z) := \sum_{1 \leq t \leq k} n_t \cdot z^{m_t} \in \F_p[z,z^{-1}]$$ form an arithmetically-free subset of $(\overline{\F}_p^\times,\cdot)$, then $G$ is finite and nilpotent. If, in addition, $G$ has exponent $p$ and $m := \max_{1 \leq s,t \leq k} | m_s - m_t | $, then $c(G) \leq m^{2^m}.$\end{proposition}

\begin{faketheorem}[\ref{TheoremAutomorphismP}] Consider a finitely-generated group $(G,\cdot)$ that is residually-(finite $p$). Suppose that the automorphism $f$ of $G$ satisfies an identity, that is: there exist constants $k \in \N$ and $m_1,\ldots,m_k,n_1,\ldots,n_k \in \Z$ such that for all $x \in G$, we have \begin{equation} 1_G = f^{m_1}(x^{n_1}) \cdot f^{m_2}(x^{n_2}) \cdots f^{m_k}(x^{n_k}). \tag{$\ast$} \end{equation} If the roots of the Laurent-polynomial $$r_{\F_p}(z) := \sum_{1 \leq t \leq k} n_t \cdot z^{m_t} \in \F_p[z,z^{-1}]$$ form an arithmetically-free subset of $(\overline{\F}_p^\times,\cdot)$, then $G$ is linear. If $G$ is also a torsion group, then $G$ is finite.
%Consider a finitely-generated, residually-finite, $p$-group $(G,\cdot)$ and an automorphism $f$ of $G$. Let $k,m_1,\ldots,m_k,n_1,\ldots,n_k$ be integers such that for all $x \in G$, we have the identity \begin{equation} 1_G = f^{m_1}(x^{n_1}) \cdot f^{m_2}(x^{n_2}) \cdots f^{m_k}(x^{n_k}). \tag{$\ast$} \end{equation} If the roots of the Laurent-polynomial $$r_{\F_p}(z) := \sum_{1 \leq t \leq k} n_t \cdot z^{m_t} \in \F_p[z,z^{-1}]$$ form an arithmetically-free subset of $(\overline{\F}_p^\times,\cdot)$, then $G$ is finite and nilpotent. 
\end{faketheorem}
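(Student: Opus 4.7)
The plan is to reduce the statement to an application of Corollary~\ref{CorollaryGrading} via the Lazard correspondence between pro-$p$ groups and their associated restricted Lie algebras. Since $G$ is residually-(finite~$p$), the canonical morphism $G \hookrightarrow \widehat{G}_p$ is injective, and the identity $(\ast)$ extends by continuity to the pro-$p$ completion $\widehat{G}_p$. The automorphism $f$ extends uniquely to a continuous automorphism $\hat{f}$ of $\widehat{G}_p$, and the Zassenhaus filtration $\{D_n(\widehat{G}_p)\}_{n \geq 1}$ is preserved by $\hat{f}$. Hence $\hat{f}$ induces a graded automorphism $\phi$ of the restricted graded $\F_p$-Lie algebra $L := \bigoplus_{n \geq 1} D_n/D_{n+1}$, whose homogeneous components are finite-dimensional since $G$ is finitely generated.

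The first key step is to translate the group identity $(\ast)$ into the Lie-algebraic identity $r_{\F_p}(\phi) = 0$ on $L$. For $\bar{x} \in L_d$ with lift $x \in D_d$, each factor $\hat{f}^{m_t}(x^{n_t})$ lies in $D_d$ (which uses that $\phi$ is an automorphism, in particular for negative exponents $m_t < 0$) and projects to $n_t \phi^{m_t}(\bar{x})$ in the abelian quotient $L_d = D_d/D_{d+1}$; reducing $(\ast)$ modulo $D_{d+1}$ then yields $r_{\F_p}(\phi)(\bar{x}) = \sum_t n_t \phi^{m_t}(\bar{x}) = 0$. Extending scalars to $\overline{L} := L \otimes_{\F_p} \overline{\F}_p$, the generalised eigenspace decomposition of $\phi$ on each finite-dimensional $L_d \otimes \overline{\F}_p$ combines to a vector-space decomposition $\overline{L} = \bigoplus_{\alpha \in \overline{\F}_p^\times} \overline{L}_\alpha$; because $\phi$ is a Lie-algebra automorphism we have $[\overline{L}_\alpha, \overline{L}_\beta] \subseteq \overline{L}_{\alpha\beta}$, so this is a grading by $(\overline{\F}_p^\times, \cdot)$. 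Its support is contained in the roots of $r_{\F_p}$ and is therefore arithmetically-free (as every subset of an arithmetically-free set is), and Corollary~\ref{CorollaryGrading} forces $\overline{L}$, and hence $L$, to be nilpotent.

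To close the argument I would invoke the standard result from the theory of analytic pro-$p$ groups: a finitely generated pro-$p$ group whose associated restricted Lie algebra (via the Zassenhaus filtration) is nilpotent is $p$-adic analytic, and hence embeds into some $GL_n(\Q_p)$. This implies that $G \hookrightarrow \widehat{G}_p$ is linear. If $G$ is additionally torsion, then $G$ is a finitely-generated torsion subgroup of $GL_n(\Q_p)$, so by Schur's theorem it is locally finite, and being finitely generated, even finite. The main obstacle will be the faithful transfer of $(\ast)$ into the identity $r_{\F_p}(\phi) = 0$ on $L$ -- especially handling the negative exponents and the multiplicative-to-additive reduction on each graded layer -- together with the precise invocation of the Lazard/Zel'manov implication from ``nilpotent associated Lie algebra'' to ``$p$-adic analytic'' in the required generality.
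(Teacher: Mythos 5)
Your proposal is correct and follows essentially the same route as the paper: linearise the identity $(\ast)$ on the restricted Lie algebra of the $p$-dimension (Zassenhaus) series, pass to the eigenspace grading by $(\overline{\F}_p^\times,\cdot)$ with arithmetically-free support, apply Corollary~\ref{CorollaryGrading} to get nilpotency, and then invoke Lazard's criterion for $p$-adic analyticity and Burnside--Jordan--Schur for the torsion case. The only cosmetic difference is that the paper works with the finite quotients $L/\bigoplus_{s\geq t}N_s/N_{s+1}$ to deduce the uniform class bound, whereas you extend scalars and grade $\overline{L}$ directly; both are covered by the dimension-free Corollary~\ref{CorollaryGrading}.
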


An identity $(\ast)$ for which the corresponding set of roots is arithmetically-free, is said to be arithmetically-free (over $\F_p$) of degree $m := \max_{1 \leq s,t \leq k} | m_s - m_t | $. This condition on the roots is technical, but easily satisfied. If $f$ is a split automorphism of prime order $q \neq p$, for example, %satisfies the identity $$1_G = x \cdot f(x) \cdot f^2(x) \cdots f^{q-1}(x),$$ so that
then its associated Laurent-polynomial $r_{\F_p}(z)$ is the cyclotomic polynomial $\Phi_q(z)$, and its set of roots is arithmetically-free in $(\overline{\mathbb{\F}}_p^\times,\cdot)$. Moreover: the identity $(\ast)$ is arithmetically-free (over $\F_p$) if $r_{\F_p}(z)$ is an irreducible polynomial (over $\F_p$) and satisfies $r_{\F_p}(0) \cdot r_{\F_p}(1) \neq 0$, (Lemma \ref{LemmaIrreducible}). %(Note that the set of roots is arithmetically-free over any field of characteristic unequal to $q$.) %Such split automorphisms occurred naturally in the study of the Frobenius conjecture and the Hughes conjecture. 
%Alternatively: if $r_{\F}(z)$ is an irreducible polynomial over a prime field $\F$, and if $r_{\F}(z)$ is not a multiple of $z$ or $z-1$, then its set of roots forms an arithmetically-free set (Lemma $6$).  \newline
%Also: if $r_{\F_p}(z)$ is an irreducible polynomial and $r_{\F_p}(1) \cdot r_{\F_p}(0) \neq 0$, then its set of roots forms an arithmetically-free set (Lemma $6$). %\newline

%An elementary counting argument yields the refinement:

\begin{fakecorollary}[\ref{CorollaryBoundP}] Consider a $d$-generated $p$-group of finite exponent $l$. If an automorphism of $G$ satisfies an arithmetically-free identity of degree $m$ over $\F_p$, then % If, in addition, $G$ is $d$-generated with exponent $l$, and if $m$ is the degree of the identity, then 
$$|G| \leq l^{d^{m^{2^m}+1}}.$$ \end{fakecorollary}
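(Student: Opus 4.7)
The plan is to derive the corollary from Corollary \ref{CorollaryGrading} via the Lie-ring method for $p$-groups, following the strategy underlying Theorem \ref{TheoremAutomorphismP}. Since $G$ is torsion, residually-(finite $p$), and $d$-generated, Theorem \ref{TheoremAutomorphismP} already ensures $|G|<\infty$; the task is to extract the quantitative bound. I attach to $G$ the graded restricted $\F_p$-Lie algebra $L := \bigoplus_{i \geq 1} D_i(G)/D_{i+1}(G)$ coming from the Jennings-Lazard-Zassenhaus $p$-series $D_\bullet(G)$. Then $L$ is generated by $L_1$ of $\F_p$-dimension at most $d$, $\dim_{\F_p} L = \log_p |G|$, and the automorphism $f$ preserves every $D_i(G)$, so it descends to an $\F_p$-linear automorphism $\overline{f}$ of $L$.

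Translate the identity $(\ast)$ to a linear identity for $\overline{f}$ as follows. For $\overline{x} \in L_i$, expand $\prod_t f^{m_t}(x^{n_t}) = 1_G$ modulo $D_{i+1}(G)$ via the Hall-Petresco / Magnus formula; all nonlinear contributions (commutators of depth $\geq 2$ and the higher corrections in $x^{n_t}$) lie in $D_{i+1}(G)$, so what survives in $L_i$ is $\sum_t n_t \cdot \overline{f}^{\,m_t}(\overline{x})=0$. Thus $r_{\F_p}(\overline{f})=0$ as an operator on $L$. Extending scalars to $\overline{L}:=L\otimes_{\F_p}\overline{\F}_p$, the operator $\overline{f}$ need not be semisimple, but some $p^N$-th power $\overline{f}^{p^N}$ is: its Jordan-Chevalley nilpotent part dies in characteristic $p$, while its diagonal part has spectrum $\{\alpha^{p^N} : \alpha \text{ eigenvalue of } \overline{f}\}$. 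These $\alpha$ lie among the nonzero roots of $r_{\F_p}(z)$, and the Frobenius $\alpha \mapsto \alpha^{p^N}$ is a group automorphism of $(\overline{\F}_p^\times,\cdot)$ preserving arithmetic-freeness. Hence the eigenspace decomposition of $\overline{f}^{p^N}$ grades $\overline{L}$ by $(\overline{\F}_p^\times,\cdot)$ with arithmetically-free support $X'$ of cardinality at most $m$, and Corollary \ref{CorollaryGrading} bounds the nilpotency class of $\overline{L}$, and therefore of $L$, by $c := 1 + |X'| + \cdots + |X'|^{2^{|X'|-1}-2} \leq m^{2^m}$.

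Finally, transfer the class bound from $L$ to $G$. Standard results of the Lie-ring / $p$-adic method (as used in the proof of Theorem \ref{TheoremAutomorphismP}) give that $G$ is nilpotent of class at most $c$. For a fixed $d$-element generating set, every element of $G$ is a product of basic commutators of weight $\leq c$; their number is bounded by $d + d^2 + \cdots + d^c \leq d^{c+1}$, and each has order dividing the exponent $l$, whence $|G| \leq l^{d^{c+1}} \leq l^{d^{m^{2^m}+1}}$. The main obstacle is the linearisation in step two: verifying that the word of $(\ast)$ really collapses to $r_{\F_p}(\overline{f})(\overline{x})$ modulo $D_{i+1}(G)$, so that the group-theoretic identity reduces to a polynomial identity on the graded restricted Lie algebra. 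The remaining pieces -- the Frobenius-twist to obtain a semisimple operator, the direct invocation of Corollary \ref{CorollaryGrading}, the standard Lie-ring-to-group transfer, and the basic-commutator count -- are essentially routine.
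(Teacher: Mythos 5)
Your first half is essentially the paper's route: pass to the Jennings--Lazard--Zassenhaus $p$-dimension series, linearise the identity to $r_{\F_p}(\overline f)=0$ on the associated restricted $\F_p$-Lie algebra $L$, extract a grading of $L\otimes\overline{\F}_p$ by $(\overline{\F}_p^\times,\cdot)$ with arithmetically-free support of size at most $m$, and conclude $c(L)\leq m^{2^m}$ from Corollary \ref{CorollaryGrading}. (Your Frobenius-twist to force semisimplicity is a harmless variant of the paper's use of the eigenspace decomposition in Lemma \ref{LemmaLinearisation}.) The identity $|G| = p^{\dim_{\F_p}L}$ is also correct and is the paper's starting point for the count.

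The gap is in your transfer step. You assert that ``standard results'' give $c(G)\leq c(L)$. This is false as a general principle and is not standard: the dimension series only refines the lower central series ($\gamma_i(G)\subseteq D_i(G)$), so $\gamma_{c+1}(L)=0$ yields congruences like $\gamma_{c+1}(G)\subseteq D_{c+2}(G)$, not triviality of $\gamma_{c+1}(G)$. The two series coincide only when $\exp(G)=p$, which is exactly why the paper remarks that the class bound for $G$ follows ``simply using the lower central series'' in the prime-exponent case, and why it lists the statement $c(G)\leq m^{2^m}$ for general finite $p$-groups as an \emph{open problem} at the end of Section 5. Your proposal silently proves that open problem, so the argument as written does not close. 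The paper avoids the group entirely: $L$ is generated by the $d$ cosets $a_i N_2$ only \emph{as a restricted algebra}; the \emph{ordinary} subalgebra $L^\ast$ they generate is $d$-generated and nilpotent of class at most $m^{2^m}$, hence $\dim_{\F_p}L^\ast\leq d^{m^{2^m}+1}$; and a lemma of Bahturin (also Shalev) gives $\dim_{\F_p}L\leq \log_p(l)\cdot\dim_{\F_p}L^\ast$, whence $|G|=p^{\dim L}\leq l^{d^{m^{2^m}+1}}$. You need this last ingredient (or some substitute controlling the contribution of the $p$-power map to $\dim L$ in terms of the exponent); your basic-commutator count in $G$ cannot replace it without the unproven class bound on $G$.
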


%\begin{fakecorollary}[\ref{CorollaryBoundP}] If, in addition, $G$ is $d$-generated with exponent $l$, and if $m$ is the degree of the identity, then $$|G| \leq l^{d^{m^{2^m}+1}}.$$ \end{fakecorollary}

We justify the condition on the exponent in Example \ref{ExampleIdentitiesAF}. Applying Corollary \ref{CorollaryGrading} to the torsion-free case is more straight-forward:

\begin{faketheorem}[\ref{TheoremTorsionFree}] Consider a finitely-generated, torsion-free, nilpotent group $(G,\cdot)$. If some automorphism of $G$ satisfies an arithmetically-free identity over $\Q$ of degree $m$, then $G$ is nilpotent and $c(G) \leq m^{2^m}.$ \end{faketheorem}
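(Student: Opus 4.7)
The plan is to reduce to Corollary \ref{CorollaryGrading} via the Mal'cev correspondence. First I would pass to the associated Mal'cev Lie $\Q$-algebra $L$ of $G$: since $G$ is finitely-generated, torsion-free, and nilpotent, $L$ is a finite-dimensional nilpotent Lie algebra over $\Q$ with $c(L) = c(G)$, and the automorphism $f$ extends functorially to a Lie-algebra automorphism $\phi$ of $L$. It therefore suffices to bound $c(L)$.

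Next I would linearize the identity $(\ast)$ on $L$. By Zariski-density, $(\ast)$ extends from $G$ to the Mal'cev completion $G_{\Q}$, and via the exponential map every element of $G_{\Q}$ is of the form $\exp(u)$ for some $u \in L$. Plugging in $x = \exp(tu)$ for $u \in L$ and a formal parameter $t$, and using $\exp(tu)^{n_s} = \exp(n_s t\, u)$ together with $f^{m_s}(\exp(n_s t\, u)) = \exp(n_s t\,\phi^{m_s}(u))$, the identity $(\ast)$ becomes
\begin{equation*}
0 \;=\; \log\bigl(\exp(n_1 t \,\phi^{m_1}(u))\cdots\exp(n_k t\,\phi^{m_k}(u))\bigr).
\end{equation*}
By the Baker--Campbell--Hausdorff expansion, the coefficient of $t$ on the right-hand side is exactly $\sum_{s} n_s \phi^{m_s}(u) = r(\phi)(u)$, since every bracket-correction contributes in $t$-degree at least $2$. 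As this must vanish for every $u \in L$, the operator $r(\phi)$ is zero on $L$.

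Now I would extend scalars to $\overline{\Q}$ and decompose $\overline{L} := L \otimes_{\Q} \overline{\Q}$ into the generalised eigenspaces of $\phi$, yielding $\overline{L} = \bigoplus_\lambda \overline{L}_\lambda$. Since $\phi$ is a Lie-algebra automorphism, $[\overline{L}_\lambda,\overline{L}_\mu] \subseteq \overline{L}_{\lambda\mu}$, so the decomposition is a grading of $\overline{L}$ by the multiplicative group $(\overline{\Q}^\times,\cdot)$. Its support $X$ is contained in the root-set of $r(z)$ in $\overline{\Q}^\times$, which is arithmetically-free by hypothesis. Corollary \ref{CorollaryGrading} then yields
\begin{equation*}
c(G) \;=\; c(L) \;=\; c(\overline{L}) \;\leq\; 1 + |X| + |X|^2 + \cdots + |X|^{2^{|X|-1}-2},
\end{equation*}
and since $|X|$ is at most the degree of the ordinary polynomial $z^{-\min_s m_s}\,r(z)$, which equals $m$, a straightforward estimate gives $c(G) \leq m^{2^m}$.

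The delicate step is the linearisation above: one must justify rigorously that the non-abelian group identity collapses to the linear Lie-algebra identity $r(\phi) = 0$. The point is that every $\exp$-argument appearing in the BCH-expansion is of order $O(t)$, so that all commutator corrections are $O(t^2)$ and contribute nothing to the coefficient of $t$; equivalently one can view $(\ast)$ as a morphism of unipotent algebraic $\Q$-groups and apply the Lie-functor. Once this is handled, the remainder is a standard Jordan-decomposition argument combined with an application of Corollary \ref{CorollaryGrading}.
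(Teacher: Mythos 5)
Your argument is correct, but it follows a genuinely different route from the paper's. The paper does not pass to the Mal'cev Lie algebra at all: it takes $N_i := I(\gamma_i(G))$, the isolators of the lower central series, forms the associated \emph{graded} Lie ring $L = \bigoplus_i N_i/N_{i+1}$ (whose additive group is $(\Z^h,+)$ with $h$ the Hirsch length, and which satisfies $c(L)=c(G)$), and then invokes Lemma \ref{LemmaLinearisation}. The point of that lemma is that on each \emph{abelian} section $N_i/N_{i+1}$ the group identity $1_G=\prod_t f^{m_t}(x^{n_t})$ collapses immediately to the additive identity $0=\sum_t n_t\overline{f}^{m_t}(v)$, and this extends to all of $L$ by linearity; no Baker--Campbell--Hausdorff expansion, Zariski-density, or algebraic-group formalism is needed. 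After extending scalars to $\overline{\Q}$ the eigenspace decomposition and the application of Corollary \ref{CorollaryGrading} proceed exactly as in your write-up. What your route buys is that it works with the group itself (via its unipotent hull) rather than with an associated graded object, and the step you flag as delicate -- extracting the coefficient of $t$ in the BCH series, which terminates since $L$ is nilpotent over $\Q$ -- is indeed rigorous as you describe it. What the paper's route buys is uniformity: the same Lemma \ref{LemmaLinearisation}, applied to the $p$-dimension series instead of the isolator series, also drives the residually-(finite $p$) case in Theorem \ref{TheoremAutomorphismP}, where a BCH/Mal'cev argument is not available. One small slip worth noting: the bound you quote, $1+|X|+\cdots+|X|^{2^{|X|-1}-2}$, is the one from Theorem \ref{MainTheoremGrading}; Corollary \ref{CorollaryGrading} as stated gives $\delta(X)^{2^{|X|-1}-1}\leq|X|^{2^{|X|-1}-1}$ -- but either estimate, combined with $|X|\leq m$, yields $c(G)\leq m^{2^m}$ as required.
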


%\begin{proposition} Consider a finitely-generated, residually-finite, $p$-group $(G,\cdot)$ and an automorphism $f$ of $G$. If $f$ satisfies an identity that is arithmetically-free over $\F_p$, then $G$ is finite. \end{proposition}

These results all come from gradings by the multiplicative group of a field. But if the algebra is graded by the \emph{additive} group $(\F,+)$ of a field $\F$ of prime characteristic, then the grading group will have highly non-trivial torsion, so that the question of nilpotency becomes more complicated. In section $4$ we will consider a special problem: for which $n\in \N$ and $p \in \mathbb{P}$ is the set of $n$'th roots of unity arithmetically-free in the \emph{additive} group $(\overline{\mathbb{F}}_p,+)$? We will illustrate how it is related to a classical result of Kostrikin-Kuznetsov and the contributions of Shalev-Zel'manov to the resolution of the co-class conjectures for $p$-groups. \newline

\emph{Convention:} Not all groups in this paper will be abelian, but an additive group $(G,+)$ is always understood to be abelian. Since we will sometimes be working with groups that come from a ring or field, we will try to avoid confusion by referring to the group together with its operation. For example: $(G,+) \subseteq (\mathbb{F}_p,+)$ and $(H,\cdot) \subseteq (\mathbb{F}_p^\times,\cdot)$. 

\section{Arithmetically-free subsets of groups}

For an element $g$ of a group $G$, we let $\langle g \rangle$ to be the cyclic subgroup of $G$ that is generated by $g$.

\begin{definition} For a subset $X$ of an abelian group $(G,+)$, we define its set of periods in $G$ to be $$P_G(X) := \{ g \in G | \exists x \in X : x + \langle g \rangle \subseteq X \}.$$ A \emph{finite} subset $X$ of an abelian group $(G,+)$ is said to be \emph{arithmetically-free} in $G$, iff $$X \cap P_G(X) = \emptyset.$$ A finite subset $X$ of an \emph{arbitrary} group $(G,\cdot)$ is arithmetically-free, iff for every abelian subgroup $H$ of $G$, the intersection $X_H := X\cap H$ is arithmetically-free in $H$: $$X_H \cap P_{H}(X_H) = \emptyset.$$ \end{definition}

We note that for a finite set $X$ in the abelian group $G$, the set of periods $P_G(X)$ is a union of finite cyclic subgroups $C$ of $G$, each of size $1 \leq |C| \leq |X|$.

\begin{proposition} \label{PropositionCharacterisationAF} A finite subset $X$ of an abelian group $(G,+)$ is arithmetically-free, iff $X$ satisfies one of the following two equivalent conditions:
\begin{enumerate}[a.]
\item If $X$ contains the truncated arithmetic progression $x,x+y,\ldots,x+ |X| \cdot y$, then $y \not \in X$,
\item If $X$ contains the full arithmetic progression $x, x+ y, x + 2y , \ldots$, then $y \not \in X$.
\end{enumerate}
More generally: a finite subset $X$ of an arbitrary group $(G,\cdot)$ is free, iff $X$ does not contain an arithmetic progression $x,x\cdot y,x \cdot y \cdot y,\ldots$ with $y \in X$ and $x \cdot y = y \cdot x$.
\end{proposition}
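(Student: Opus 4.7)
The plan is to first handle the abelian case by establishing the chain (Definition) $\Leftrightarrow$ (b) $\Leftrightarrow$ (a), and then to reduce the non-abelian statement to the abelian one by passing to the abelian subgroup generated by any commuting pair of elements of $X$. Most of these implications are either unfoldings of definitions or trivial monotonicity arguments; the sole step with real content---and hence the main obstacle---is the direction (b) $\Rightarrow$ (a), which I would settle by a pigeonhole argument exploiting the finiteness of $X$.

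For the equivalence (Definition) $\Leftrightarrow$ (b), I would note that a full arithmetic progression $x, x+y, x+2y, \ldots$ lying in the finite set $X$ forces the increment $y$ to have finite order $d = |\langle y \rangle|$, and in that case the progression coincides with the coset $x + \langle y \rangle$. Hence the assertion ``some $y \in X$ is the increment of a full arithmetic progression starting at some $x \in X$ and contained in $X$'' is literally the same as the assertion ``$y \in X \cap P_G(X)$ for some witness $x \in X$''. So (b) is nothing other than the contrapositive of $X \cap P_G(X) = \emptyset$.

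The direction (a) $\Rightarrow$ (b) is immediate, since every full arithmetic progression contains the truncated progression $x, x+y, \ldots, x+|X|\cdot y$; in other words, the hypothesis of (b) is the stronger one. For the harder direction (b) $\Rightarrow$ (a), I would argue as follows. If the truncated progression of length $|X|+1$ lies in the $|X|$-element set $X$, then by pigeonhole two of its terms must coincide, so $y$ has finite order $d$ with $d \leq |X|$. The coset $x + \langle y \rangle = \{x + iy : 0 \leq i < d\}$ is therefore a subset of the truncated progression, hence of $X$; in particular $X$ contains the full progression, so (b) delivers $y \notin X$.

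Finally, for the non-abelian extension, two commuting elements $x, y$ of $X$ lie inside the abelian subgroup $H := \langle x, y \rangle$. A commuting multiplicative progression $x, x\cdot y, x\cdot y^2, \ldots$ in $X$ with $x, y \in X$ corresponds to a full arithmetic progression in $X_H := X \cap H$ with increment $y \in X_H$, which by the abelian case contradicts $X_H \cap P_H(X_H) = \emptyset$; conversely, any such obstruction inside an abelian subgroup produces, via the abelian characterisation, a commuting pair $x, y \in X_H \subseteq X$ witnessing the forbidden progression in $X$.
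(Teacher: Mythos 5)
Your proposal is correct and follows essentially the same route as the paper, whose entire proof is the observation that $x,x+y,\ldots,x+|X|y\subseteq X$ iff $x+\langle y\rangle\subseteq X$; your pigeonhole argument for (b) $\Rightarrow$ (a) and your identification of a full progression inside a finite set with the coset $x+\langle y\rangle$ are exactly the content of that observation, just spelled out. The reduction of the non-abelian case to the abelian subgroup $\langle x,y\rangle$ is likewise the intended (if unstated) argument.
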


\begin{proof} We need only observe that $x,x+y,x+2y,\ldots,x+|X| y \subseteq X$, iff $x + \langle y \rangle \subseteq X$. \end{proof}

\paragraph{Examples of arithmetically-free sets.} Let us first consider some subsets which satisfy the condition trivially. We will show in section $4$ that they correspond with classical results in the literature.

\begin{example} \label{ExampleUnits} If $p$ is a prime, then $X := \Z_p^\times$ and all its subsets are arithmetically-free in $(\Z_p,+)$. There are no other arithmetically-free subsets of $\Z_p$. More generally: the set of generators $$\Z_n^\times := \{ x \in \Z_n | \operatorname{ord}_{(\Z_n,+)}(x) = n \}$$ of a non-trivial cyclic group $(\Z_n,+)$ is arithmetically-free in $(\Z_n,+)$. \end{example}

\begin{example} \label{ExampleTorsionFree} If $(G,\cdot)$ is a torsion-free group, then every finite subset $X$ of $G \setminus \{ 1 \}$ is arithmetically-free. There are no other arithmetically-free subsets of $G$. Special case: the groups $(\Z^m,+)$, for $m \in \N$. \end{example}

A combination of \emph{Example} \ref{ExampleUnits} and \ref{ExampleTorsionFree} gives: if every abelian subgroup of $G$ is simple or free-abelian, then every finite subset $X$ of $(G,\cdot) \setminus \{ 1 \}$ is arithmetically-free. The converse is also true. In particular: all Tarski monsters satisfy the property.

\begin{example} \label{ExampleOrder} If $X$ is a finite subset of a group $G$ satisfying $\operatorname{ord}_G(y) > |X|$ for all $y \in X$, then $X$ is an arithmetically-free subset of the group. In particular: suppose that $(\F,+,\cdot)$ is an algebraically closed field.
\begin{enumerate}[i.]
\item If $X \subseteq (\F^\times,\cdot)$ is a finite set that does not contain any roots of unity, then $X$ is arithmetically-free.
\item Suppose that $\F$ has characteristic $p > 0$. Let $n \in \N$ and let $\zeta$ be a primitive $n$'th root of unity. If $n < p$, then the set $\{ x \in \F | x^{n} = 1 \}$ of all $n$'th roots is arithmetically-free in the \emph{additive} (elementary abelian) group $(\mathbb{F}_p[\zeta],+)$.
\end{enumerate}
\end{example}

\begin{example} \label{ExampleSumFree} If $X$ is a finite sum-free subset of $(G,+)$, then $X$ is arithmetically-free. \end{example}

From these basic examples, we can generate more:

\begin{lemma} \label{LemmaProjectionAF} If $\map{\pi}{G}{H}$ is a surjective homomorphism of abelian groups, and if $Y$ is an arithmetically-free subset of $H$, then every finite subset of $X := \pi^{-1}(Y)$ is arithmetically-free in $G$. If $G$ is finite, then $\frac{|X|}{|G|} = \frac{|Y|}{|H|}$. \end{lemma}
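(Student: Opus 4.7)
The plan is to reduce the statement to the more concrete characterization of arithmetic-freeness given by Proposition \ref{PropositionCharacterisationAF}(b): a finite set is arithmetically-free exactly when it contains no full arithmetic progression $x,x+y,x+2y,\ldots$ whose increment $y$ lies in the set itself. This reformulation is perfectly suited to being pushed forward along a homomorphism, so the entire argument becomes a transport of structure under $\pi$.

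First I would fix a finite subset $X' \subseteq X = \pi^{-1}(Y)$ and assume, for contradiction, that $X'$ is \emph{not} arithmetically-free in $G$. By Proposition \ref{PropositionCharacterisationAF}, there exist $x,y \in X'$ with $x + \langle y \rangle \subseteq X'$. I would then apply $\pi$ to this progression: since $\pi$ is a group homomorphism, $\pi(x + ky) = \pi(x) + k\,\pi(y)$ for every $k \in \Z$, so the full progression $\pi(x) + \langle \pi(y) \rangle$ lies in $\pi(X') \subseteq \pi(X) = Y$. Moreover $\pi(y) \in \pi(X') \subseteq Y$. But this exhibits a full arithmetic progression in the finite set $Y$ whose increment lies in $Y$, contradicting the hypothesis that $Y$ is arithmetically-free in $H$. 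The finiteness of $X'$ never enters the argument except to ensure that the notion of arithmetic-freeness is meaningful for it; I would note that $X$ itself may well be infinite, which is why the lemma is phrased in terms of its finite subsets.

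For the index equality in the finite case, I would simply observe that surjectivity of $\pi$ makes every fibre $\pi^{-1}(\{h\})$ a coset of $\ker \pi$, hence of size $|\ker \pi| = |G|/|H|$. Since $X = \pi^{-1}(Y) = \bigsqcup_{h \in Y} \pi^{-1}(\{h\})$, this gives $|X| = |Y| \cdot |G|/|H|$, and dividing by $|G|$ yields $|X|/|G| = |Y|/|H|$. I do not anticipate a genuine obstacle in this lemma; the only thing worth double-checking is that the characterization in Proposition \ref{PropositionCharacterisationAF}(b) is indeed equivalent to the definition via $P_G$, so that we are free to work with progressions rather than cyclic subgroups throughout the transport argument.
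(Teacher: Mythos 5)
Your argument is correct and is essentially the paper's own proof written out in full: the paper's one-line justification, $\pi(P_G(X)) \subseteq P_H(Y)$, is exactly your transport of the progression $x+\langle y\rangle$ to $\pi(x)+\langle \pi(y)\rangle$ under the homomorphism, and your fibre-counting for the density claim is the intended (and in the paper unstated) computation.
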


\begin{proof} It suffices to note that $\pi(P_G(X)) \subseteq P_H(Y)$. \end{proof}

\begin{example} \label{ExampleDensity} If $(G,+)$ is finite and if the prime $p$ divides $|G|$, then $G$ has an arithmetically-free subset of density $1-\frac{1}{p} \geq \frac{1}{2}$. Indeed: there is a natural projection $\map{\pi}{G}{\Z_p}$, so that $X := \pi^{-1}(\Z_p^\times)$ is a maximal arithmetically-free subset with density $1 - \frac{1}{p}$ in $G$.\end{example}

This may be contrasted with the uniform upper bound for the density of sum-free subsets in finite abelian groups, \cite{GreenRuzsa}.

\paragraph{Growth of SubSums.} Let us generalise the elementary theorem of Cauchy and Davenport about sequences in $(\Z_p,+)$. For a sequence $(a_1,\ldots,a_k)$ on an abelian group $G$, we define
\begin{eqnarray*} \operatorname{SubSum}((a_1,\ldots,a_k))
& :=& \bigcup_{ \substack{\pi \in \operatorname{Sym}(k)}} \{ a_{\pi(1)} + \ldots + a_{\pi(k_0)} | 0 \leq k_0 \leq k \} \\
&=& \{ \sum_{i \in I} a_i | I \subseteq \{ 1,2,\ldots,k \} \}.
\end{eqnarray*}

The next lemma shows that the SubSum set of a sequence $S$ grows with the length of $S$, unless $P_G(\operatorname{SubSum}(S)) \cap X \not = \emptyset$.

\begin{lemma}[Growth or capture] \label{LemmaGrowth} Consider an abelian group $(G,+)$, a finite subset $X \subseteq G \setminus \{ 0 \}$, and a sequence $S := (a_1,\ldots,a_k)$ of length $k$ on $X$. Then either
\begin{enumerate}[a.]
\item $| \operatorname{SubSum}(S) | \geq k+1$, or
\item $ \operatorname{SubSum}(S) $ contains an arithmetic progression $a,a+b,a+2b, \ldots$ with increment $b \in X$.
\end{enumerate}
\end{lemma}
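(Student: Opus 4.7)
The plan is to induct on the length $k$ of the sequence $S$. For the base case $k=0$, the sequence is empty and $\operatorname{SubSum}(()) = \{0\}$ has cardinality $1 = k+1$, so case (a) holds.

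For the inductive step, set $T := (a_1,\ldots,a_{k-1})$. Splitting the index subsets of $\{1,\ldots,k\}$ according to whether they contain $k$ yields the decomposition
\[
\operatorname{SubSum}(S) \;=\; \operatorname{SubSum}(T) \,\cup\, \bigl(\operatorname{SubSum}(T) + a_k\bigr).
\]
Applying the inductive hypothesis to $T$: if $\operatorname{SubSum}(T)$ already contains an arithmetic progression with increment in $X$, then so does the larger set $\operatorname{SubSum}(S)$, and we are in case (b). Otherwise, the inductive hypothesis gives $|\operatorname{SubSum}(T)| \geq k$.

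The key step is then the following alternative. If $\operatorname{SubSum}(T) + a_k \not\subseteq \operatorname{SubSum}(T)$, then the above union strictly enlarges $\operatorname{SubSum}(T)$, so that $|\operatorname{SubSum}(S)| \geq k+1$ and case (a) holds. If instead $A := \operatorname{SubSum}(T)$ satisfies $A + a_k \subseteq A$, then since $A$ is finite and translation by $a_k$ is injective, this inclusion is an equality; hence $A$ is invariant under translation by $\pm a_k$. Because $0 \in A$, iteration yields $\langle a_k \rangle \subseteq A \subseteq \operatorname{SubSum}(S)$, which exhibits the full arithmetic progression $0, a_k, 2a_k, \ldots$ with increment $a_k \in X$, and puts us in case (b).

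I do not expect a serious obstacle: the whole argument rests on the elementary observation that a finite subset $A$ of an abelian group either strictly grows under the translate $A \mapsto A \cup (A+g)$, or is forced to contain all of $\langle g \rangle$. The hypothesis $X \subseteq G\setminus\{0\}$ rules out the degenerate increment $a_k = 0$, which is the only place where the conclusion could fail.
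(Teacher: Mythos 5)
Your proposal is correct and follows essentially the same route as the paper: induction on $k$, the decomposition $\operatorname{SubSum}(S) = \operatorname{SubSum}(T) \cup (\operatorname{SubSum}(T) + a_k)$, and the dichotomy between strict growth and translation-invariance. You merely make explicit the step the paper leaves implicit, namely that $\operatorname{SubSum}(T) + a_k = \operatorname{SubSum}(T)$ together with $0 \in \operatorname{SubSum}(T)$ forces the full progression $0, a_k, 2a_k, \ldots$ to lie in $\operatorname{SubSum}(S)$.
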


\begin{proof} The statement holds trivially for $k=1$, so we suppose that $k >1$ and we proceed by induction. Let $T$ be $(a_1,\ldots,a_{k-1})$ and note that $\operatorname{SubSum}(S) = \operatorname{SubSum}(T) \cup (\operatorname{SubSum}(T) + a_k)$. If $\operatorname{SubSum}(S)$ contains a progression with increment in $X$, we are done. Else, $\operatorname{SubSum}(T) \neq \operatorname{SubSum}(T) + a_k$ and $| \operatorname{SubSum}(T) | \geq k$, by the induction hypothesis. So $|\operatorname{SubSum}(S)| \geq k+ 1$, and we are done. \end{proof}

\begin{remark} We emphasise that we are studying the growth of SubSum for \emph{sequences}, (or equivalently: \emph{multi-sets}) rather than the growth of subset sums for \emph{sets}. \newline

In order to illustrate the difference we consider the constant sequence $S := (x,\ldots,x)$ of length $k$, for some $x \in \Z_p^\times$ and $k < p$. We see that the lower bound $k+1$ for $|\operatorname{SubSum}(S)|$ is attained. This is in stark contrast with the growth of subset sums for sets; a well-known result by Olson (which confirms a conjecture of Erdos and Heilbronn, \cite{Olson}) implies that every subset $X$ of $\Z_p^\times$ with density $\frac{|X|}{|\Z_p|} > 2 / \sqrt{p}$ is \emph{complete}, that is: $\operatorname{SubSum}(X) = \Z_p$. This result has recently been generalised by Vu to the subset $X := \Z_n^\times$ %$X := \Z_n^\times := \{ x \in \Z_n | \operatorname{ord}_{(\Z_n,+)}(x) = n \}$ 
of the group $(\Z_n,+)$, \cite{Vu}. %Every subset of $X$ is arithmetically-free by \emph{Example $4$}: $|X| = \varphi(n) < n$.
\end{remark}

The following proposition expresses the property of being arithmetically-free in terms of integer-valued invariants.

\begin{proposition}[Quantitative definition of arithmetically-free sets] \label{PropositionQuantitative} For a finite subset $X$ of an abelian group $(G,+)$, the following two properties are equivalent:
\begin{enumerate}[a.]
\item $\exists \nu \in \{ 1,2,\ldots, |X| \}$ such that $\forall a,b \in X$, $\exists \nu_0 \in \{1,2,\ldots,\nu\}$, such that $a + \nu_0 b \in G \setminus X$,
\item $\exists \delta \in \{ 1,2,\ldots, |X| \}$ such that $\forall a,b_1,\ldots,b_\delta \in X$, $\exists I \subset \{1,2,\ldots,\delta \}$ such that $a + \sum_{i \in I} b_i \in G \setminus X$. 
\end{enumerate}
We define $\delta(X)$ to be the minimal such $\delta$, and $\nu(X)$ to be the minimal such $\nu$. We have $$1 \leq \nu(X) \leq \delta(X) \leq |X|.$$
\end{proposition}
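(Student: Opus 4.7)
The plan is to recognize that both (a) and (b) are quantitative reformulations of arithmetic freeness, and that the optimal constants are at most $|X|$. The key tool for the hard direction is the Growth-or-Capture Lemma \ref{LemmaGrowth}; the key tool for the easy direction is Proposition \ref{PropositionCharacterisationAF}.

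First I would check (a) directly against Proposition \ref{PropositionCharacterisationAF}. Since $a \in X$, the existence of some $\nu_0 \in \{1,\ldots,|X|\}$ with $a+\nu_0 b \notin X$ is exactly the statement that the truncated progression $a, a+b, \ldots, a+|X|b$ fails to lie inside $X$. By Proposition \ref{PropositionCharacterisationAF}(a), requiring this for every $b \in X$ is exactly arithmetic freeness, and this shows $\nu(X) \le |X|$ whenever $X$ is arithmetically-free. Conversely, (a) clearly implies that no full progression with increment in $X$ sits inside $X$, so (a) and arithmetic freeness coincide.

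Next I would handle the easy implication (b) $\Rightarrow$ (a) by specialising to the constant sequence $b_1 = \cdots = b_\delta = b$. Then $\sum_{i \in I} b_i = |I| \cdot b$, and because $a \in X$ the witnessing subset $I$ must be non-empty; hence $\nu_0 := |I| \in \{1,\ldots,\delta\}$ satisfies $a + \nu_0 b \notin X$, giving (a) with the same constant. This yields $\nu(X) \le \delta(X)$.

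The main obstacle is the remaining implication, for which I would show directly that $\delta = |X|$ always works. Given $a, b_1, \ldots, b_{|X|} \in X$, apply Lemma \ref{LemmaGrowth} to $S := (b_1,\ldots,b_{|X|})$. In the growth alternative, $|\operatorname{SubSum}(S)| \ge |X| + 1$, so the translate $a + \operatorname{SubSum}(S)$ has strictly more than $|X|$ elements and cannot be contained in $X$, producing the required $I$. In the capture alternative, $\operatorname{SubSum}(S)$ contains a full arithmetic progression $c, c+b, c+2b, \ldots$ with $b \in X$; translating by $a$ gives a full progression $a+c, a+c+b, a+c+2b, \ldots$ in $a + \operatorname{SubSum}(S)$ with the same increment $b \in X$, which by arithmetic freeness cannot lie entirely in $X$. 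Either way, some $a + \sum_{i \in I} b_i$ escapes $X$, so (b) holds with $\delta(X) \le |X|$. Combining the three bounds produces the chain $1 \le \nu(X) \le \delta(X) \le |X|$.
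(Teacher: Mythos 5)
Your proposal is correct and follows essentially the same route as the paper: identify (a) with the truncated-progression characterisation of Proposition \ref{PropositionCharacterisationAF}, obtain (a) from (b) by specialising to a constant sequence, and establish (b) with $\delta = |X|$ via the growth-or-capture Lemma \ref{LemmaGrowth}. The only difference is cosmetic — you run the dichotomy of the lemma directly where the paper argues by contraposition — and your explicit remark that the witnessing set $I$ must be non-empty (since $a \in X$) is a small detail the paper leaves implicit.
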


\begin{proof} The first property is clearly equivalent to the definition of arithmetic-freedom, and we obtain it from the second by specialisation: $b_1 = \cdots = b_{\delta}$. Now suppose $X$ is arithmetically-free. It then suffices to prove the second property for $\delta := |X|$. Let $a,b_1,\ldots,b_{|X|} \in X$ and define $S := (b_1,\ldots,b_{|X|})$. If $a + \operatorname{SubSum}(S) \subseteq X$, then $|\operatorname{SubSum}(S)| = |a + \operatorname{SubSum}(S)| \leq |X|$, and lemma \ref{LemmaGrowth} implies that $X$ is not arithmetically free. So we may conclude that $a + \operatorname{SubSum}(S)$ is not contained in $X$, and we are done. \end{proof}

\begin{example} Let $(G,+)$ be an abelian group.
\begin{enumerate}[i.]
\item The arithmetically-free subsets $X$ of $G$ satisfying $\nu(X) = \delta(X) = 1$, are precisely the finite sum-free subsets of $G$.
\item Every arithmetically-free subset $X$ of $(G,+)$ satisfying $\nu(X) = \delta(X) = |X|$, is a truncated arithmetic progression of the form $X := \{a,2a,\ldots,|X| a \}$ with $\operatorname{ord}(a) > |X|$. But if $G$ has torsion, the converse need not be true: the set $X := \{\overline{1},\overline{2},\overline{3}\}$ is not arithmetically-free in $(\Z_4,+)$, even though $\operatorname{ord}(\overline{1}) = \operatorname{ord}(\overline{3}) > |X|$. \item For the arithmetically-free subsets $X \subseteq G$ and $Y \subseteq H$ from lemma $1$, the densities $|X|/|G|$ and $|Y|/|H|$ agree. We also have: $\nu(X) = \nu(Y)$ and $\delta(X) = \delta(Y)$. 
\end{enumerate}
\end{example}

%We note that Olson also proved the following theorem (a generalisation of a result by Erdos, Ginzburg and Ziv) for sequences in groups: If $(G,\cdot)$ is a finite group of order $n$, and $S := (x_1,\ldots,x_{2n-1}) \in G^{2n-1}$, then $\operatorname{SubSum}$ contains a proper subgroup of $G$, or some element $x$ occurs with multiplicity at least $n$. This can be used to give results similar to those of corollary $2$ for .

\paragraph{Good-orderings on arithmetically-free sets.} In order to prove the solvability of graded groups and rings, Shalev introduced the notion of good-orderings, \cite{Shalev}. This idea goes back to at least Kreknin.

\begin{definition} Let $X$ be a subset of an abelian group $(G,+)$. A total order $<$ on $X$ is a \emph{good-ordering}, iff there are no $x,y \in X$ with $x + y \in X$ and $x < x + y < y$. A subset $X$ of an arbitrary group $(G,\cdot)$ is good-ordered, iff every abelian subset of $X$ has a good-ordering. \end{definition}

We refer to the literature for more about the following theorem: \cite{BorelMostow,Kreknin,Shalev}.

\begin{theorem}[Borel-Mostow; Kreknin; Shalev] \label{TheoremShalev} Consider a Lie algebra $L$ that is graded by an abelian group $(G,+)$ with finite support $X$. If $X$ admits a good-ordering, and $0 \not \in X$, then $L$ is solvable of length at most $2^{|X|-1}-1$. \end{theorem}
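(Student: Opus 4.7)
The plan is to induct on $n := |X|$, aiming for the recursion $f(n) \leq 2 f(n-1) + 1$ with base $f(1) = 0$, which solves to the desired bound $2^{n-1} - 1$. The base case is immediate: for $X = \{x\}$ with $x \neq 0$ one has $2x \neq x$, hence $[L, L] \subseteq L_{2x} = 0$ and $L$ is abelian.

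For the inductive step, I would aim to produce a graded ideal $J \triangleleft L$ such that both $J$ and $L/J$ are graded Lie algebras whose supports have cardinality $\leq n-1$, still miss $0$, and are good-ordered. Granting this, induction yields solvable length $\leq 2^{n-2} - 1$ for each factor; combining the derived series of $L/J$ and $J$ (via $L^{(k)} \subseteq J$ when $(L/J)^{(k)} = 0$) then yields length $\leq 2^{n-1} - 1$ for $L$. The inheritance of good-orderedness and the absence of $0$ by subsets of $X$ are automatic, so everything comes down to producing such an ideal.

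Construction of $J$ is the heart of the matter, and the main obstacle. The good-ordering on $X$ excludes ``middle sums'': for $x, y \in X$ with $x < y$ and $x + y \in X$, one must have either $x + y \leq x$ or $x + y \geq y$. Writing $X = \{x_1 < \cdots < x_n\}$, the minimality of $x_1$ combined with $0 \notin X$ then forces $x_1 + y > y$ whenever $y > x_1$ and $x_1 + y \in X$ (for otherwise $y \leq 0$ or $x_1 = 0$). Hence $\bigoplus_{y > x_1} L_y$ is stable under $\operatorname{ad}(L_{x_1})$; however, this subspace is not automatically an ideal, since brackets $[L_y, L_z]$ with $y, z > x_1$ might still land in $L_{x_1}$. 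To upgrade this partial closure into a genuine graded ideal, I would either take $J$ to be the two-sided ideal generated by the top piece $L_{x_n}$ and use the good-ordering to trap its support inside an upper interval $\{x_k, \ldots, x_n\}$ with $k \geq 2$, or perform a direct Kreknin--Shalev computation on the derived series showing that each derivation strictly shrinks the effective support under the good-ordering. This combinatorial bookkeeping with the good-ordering condition is the delicate technical step; once it is in hand, the induction closes cleanly with the required bound.
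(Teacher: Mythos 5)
First, a remark on the comparison itself: the paper gives no proof of Theorem \ref{TheoremShalev} --- it is quoted from the literature (Borel--Mostow, Kreknin, Shalev) with an explicit pointer to those references --- so your sketch has to be measured against the published arguments it is meant to replace. Against that standard it has two genuine problems. The first is the base case. A Lie algebra graded with singleton support $\{x\}$, $x \neq 0$, is abelian, not zero, so with the convention the paper actually uses (``solvable of length $s$'' means $L^{(s)} = \{0\}$, as forced by the way Theorem \ref{MainTheoremGrading} extracts the class bound $\sum_{k \leq 2^{|X|-1}-2}\delta(X)^k$ from this theorem) you must take $f(1)=1$, not $f(1)=0$. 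Then your recursion $f(n) \leq 2f(n-1)+1$ gives $2^{n}-1$, and even the sharper combination $f(n) \leq f(J) + f(L/J) = 2f(n-1)$ gives only $2^{n-1}$; neither reaches $2^{n-1}-1$. (The target itself appears to be off by one: the Heisenberg algebra with $L_{1}=\langle e_1,e_2\rangle$ and $L_{2}=\langle e_3\rangle$, graded by $(\Z,+)$, has good-ordered support of size $2$ omitting $0$ and derived length $2 > 2^{2-1}-1$. That is an issue with the statement, but it also means no correct induction of your shape can close at exactly $2^{n-1}-1$.)

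The second and more serious problem is that the decomposition your induction rests on need not exist. A graded ideal $J$ with $\operatorname{supp}(J) \subsetneq X$ and $\operatorname{supp}(L/J) \subsetneq X$ requires (barring accidental vanishing of brackets) a proper nonempty subset $S \subsetneq X$ with $(S + X) \cap X \subseteq S$. For $X = \{\overline{1},\overline{2}\} \subseteq (\Z_3,+)$ --- which is arithmetically-free and good-ordered --- no such $S$ exists, since $\overline{1}+\overline{1}=\overline{2}$ and $\overline{2}+\overline{2}=\overline{1}$. Your own analysis already shows the natural candidates fail, and your fallback does too: the ideal generated by the top component $L_{x_n}$ is pushed \emph{downward}, not upward, by the good-ordering (for $y < x_n$ with $y + x_n \in X$ the defining condition forces $y + x_n \leq y$), so it is not trapped in an upper interval $\{x_k,\ldots,x_n\}$. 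The published proofs avoid the ideal/quotient splitting altogether: Kreknin's and Shalev's arguments induct on the number of homogeneous components needed to generate a (one-sided) ideal containing a given term of the derived series, showing that passing from $L^{(e)}$ to $L^{(2e+1)}$ allows one to delete one component from the generating set; the recursion $e \mapsto 2e+1$ lives there, not in a $J$-versus-$L/J$ decomposition. Your second suggested route (``a direct Kreknin--Shalev computation on the derived series'') is indeed the right one, but it is precisely the entire content of the theorem and is not carried out.
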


It is not too difficult to generate examples that trivially satisfy the property.

\begin{lemma} A finitely-generated abelian group $(G,+)$ admits a good-ordering, iff the torsion subgroup of $G$ is cyclic. \end{lemma}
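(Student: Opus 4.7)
The proof splits into necessity and sufficiency, using throughout that a good-ordering on $G$ restricts to one on any subset.

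For necessity, suppose $(G,+)$ admits a good-ordering $<$. If the torsion subgroup is not cyclic then, by the structure theorem, $G$ contains a subgroup isomorphic to $\Z_p \oplus \Z_p$ for some prime $p$; so by the restriction property, it suffices to show $\Z_p \oplus \Z_p$ admits no good-ordering. Restricted to any cyclic subgroup $\langle g \rangle \cong \Z_p$, the ordering is good on $\Z_p$, and the triples $\{h,-h,0\}$ (whose sum is $0$) force $0$ to lie at an extreme of $\langle g \rangle$. So for each of the $p+1$ cyclic subgroups $L$ of $\Z_p \oplus \Z_p$, the set $L \setminus \{0\}$ lies entirely above or entirely below $0$ in $<$.

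I would then split into three cases. If every such $L$ lies above $0$, so $0=g_{\min}$, let $e$ be the smallest nonzero element: for each $h \in G \setminus \langle e \rangle$ the triple $\{e,h,e+h\}$ has $e$ as its minimum and good-ordering forces $e+h > h$, making $g \mapsto e+g$ a strictly rank-increasing bijection of the finite set $G \setminus \langle e \rangle$, which is impossible. The case where every $L$ lies below $0$ is symmetric by reversing the order. In the mixed case let $U := \{g<0\}$, $V := \{g>0\}$, set $u := g_{\min} \in U$, and pick any $v \in V$: then $u+v$ is distinct from $u,v,0$ (the last because $-u \in \langle u \rangle \subseteq U$); if $u+v \in U$ then $u < u+v < v$ makes the sum the median, which is bad, so $u+v \in V$, whence $V+u \subseteq V$ and in fact $V+u=V$ by finiteness. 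Avoiding $u+v$ as the median of $\{u,v,u+v\}$ further forces $u+v > v$, so $v \mapsto u+v$ is a strictly rank-increasing permutation of the finite nonempty set $V$, a contradi
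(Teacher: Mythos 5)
Your necessity argument has a genuine gap at its foundation, namely the claim that the triples $\{h,-h,0\}$ force $0$ to be an extreme element of each cyclic subgroup $\langle g\rangle\cong\Z_p$. Those triples only say that $0$ never lies strictly between $h$ and $-h$, i.e.\ that $h$ and $-h$ always sit on the same side of $0$; they do not prevent different pairs $\{h,-h\}$ from sitting on different sides. Concretely, in $\Z_5$ the total order $1<4<0<2<3$ satisfies every constraint coming from the triples $\{1,4,0\}$ and $\{2,3,0\}$, yet $0$ is not extreme; so for $p\geq 5$ your justification fails (it does happen to suffice for $p=2,3$). The fact you need is true, but it requires the constraints from all pairs, essentially the paper's argument specialised to $\Z_p$: after possibly reversing the order, the minimum $t$ of $\langle g\rangle$ is nonzero, and minimality of $t$ together with goodness applied to the pairs $(t,y)$ forces $y<y+t$ for every nonzero $y$, whence $t<2t<\dots<(p-1)t<0$ and $0$ is the maximum of $\langle g\rangle$. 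With that step repaired, your reduction to $\Z_p\oplus\Z_p$ and the three-case analysis (all of $G\setminus\{0\}$ above $0$, all below, or mixed), each case ending in a strictly order-increasing self-bijection of a finite nonempty set, is correct, and it is a genuinely different and somewhat longer route than the paper's, which works in the whole finite torsion subgroup $T$ at once: it takes $u:=\max(T\setminus\langle t_{\min}\rangle)$ and reaches the contradiction $t_{\min}<u<u+t_{\min}\in\langle t_{\min}\rangle$ in a single step, without passing through $\Z_p\oplus\Z_p$.

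Separately, your write-up breaks off before the sufficiency direction: you still have to produce a good-ordering on $G$ when the torsion subgroup is cyclic, and this is not automatic. The paper does it by embedding $G$ abstractly into the circle group and pulling back the natural good-ordering there (Shalev's Lemma 2.2); some such construction is needed to complete the proof.
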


\begin{proof} Let $<$ be a good-ordering on a finite group $(T,+)$. Since the reverse $<'$ order defined by $a <' b \leftrightarrow b < a$ is a good-ordering as well, we may suppose that the minimal element $t_{\min}$ of $T$ is not $0$. We suppose that $T \neq \langle t_{\min} \rangle$, and define $u := \max \left( T \setminus \langle t_{\min} \rangle \right).$ The good-ordering then implies the contradiction: $$t_{\min} < u < u + t_{\min} \in \langle t_{\min} \rangle.$$ We conclude that every good-ordered finite group $(T,+)$ is cyclically generated by its minimal or maximal element. More generally: the good-orderings are given by $$t < 2 t < \cdots < |T| t = 0 \text{ for } T = \langle t \rangle,$$ or their reverse orderings. \newline

Now suppose that $G$ admits a good-ordering. Then also its finite torsion subgroup $T$ admits a good-ordering. The above then implies that $T$ is cyclic. Conversely, suppose that the torsion subgroup of $G$ is cyclic. Then $G$ has a good-ordering because it can be (abstractly) embedded into the circle group (lemma $2.2$ of \cite{Shalev}).\end{proof}

\begin{example} \label{ExampleMultiplicativeField} Every finite subset $X$ of the multiplicative group $(\F^\times,\cdot)$ of a field $\F$ admits a good-ordering. Indeed: the torsion subgroup of $(\langle X \rangle,\cdot) \subseteq (\F^\times,\cdot)$ is known to be cyclic. \end{example}

As before, we can lift these basic examples to generate more examples:

\begin{lemma} \label{LemmaProjectionGO} If $\map{\pi}{G}{H}$ is a surjective homomorphism of abelian groups and if $Y \subseteq H \setminus \{ 0 \}$ admits a good-ordering, then every subset of $X := \pi^{-1}(Y) \subseteq G$ admits a good-ordering. \end{lemma}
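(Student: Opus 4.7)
The plan is to construct a good-ordering directly on $X := \pi^{-1}(Y)$, since any subset of $X$ then inherits a good-ordering by restriction (the defining condition is preserved under passage to subsets). The construction is a lexicographic lift of $<_Y$: fix a good-ordering $<_Y$ on $Y$ and, for each $y \in Y$, fix an arbitrary total order on the fiber $X_y := \pi^{-1}(\{y\})$. Then declare $a <_X b$ whenever $\pi(a) <_Y \pi(b)$, or $\pi(a) = \pi(b)$ and $a$ precedes $b$ in the chosen order on $X_{\pi(a)}$. This is evidently a total order on $X$.

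To verify the good-ordering property, I would suppose towards a contradiction that $a, b \in X$ and $c := a + b \in X$ satisfy $a <_X c <_X b$ (the symmetric arrangement is treated identically). Write $y_a := \pi(a)$, $y_b := \pi(b)$ and $y_c := y_a + y_b = \pi(c)$, all lying in $Y$. Because $Y \subseteq H \setminus \{0\}$, neither $y_a$ nor $y_b$ is $0$, so $y_c \neq y_a$ and $y_c \neq y_b$. If $y_a \neq y_b$, then $y_a, y_c, y_b$ are three pairwise distinct elements of $Y$, and the strict inequalities $a <_X c <_X b$ project under $\pi$ to strict inequalities $y_a <_Y y_c <_Y y_b$ in $Y$, contradicting the good-ordering $<_Y$. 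If instead $y_a = y_b$, then $y_c$ differs from their common value, so $c$ is lexicographically either strictly above both $a$ and $b$ (when $y_a <_Y y_c$) or strictly below both (when $y_c <_Y y_a$); in neither sub-case can $c$ lie strictly between $a$ and $b$.

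Both cases yield a contradiction, so $<_X$ is a good-ordering on $X$, and we are done. There is no real obstacle here; the delicate point is solely the use of $0 \notin Y$ to rule out $y_c \in \{y_a, y_b\}$, which is precisely why the hypothesis $Y \subseteq H \setminus \{0\}$ is imposed. The specific choice of total order on each fiber plays no substantive role, and the argument is a structural mirror of Lemma \ref{LemmaProjectionAF} for arithmetic-freedom.
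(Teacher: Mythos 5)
Your proof is correct and follows essentially the same route as the paper: fix an arbitrary total order on each fiber of $\pi$ over $Y$ and compare elements in distinct fibers via the good-ordering on $Y$. The paper merely asserts that this lexicographic lift is a good-ordering, whereas you carry out the verification (correctly using $0 \notin Y$ to rule out $\pi(a+b) \in \{\pi(a),\pi(b)\}$), so your write-up is a fleshed-out version of the same argument.
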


\begin{proof} Let $K\subseteq G$ be the kernel of the projection. For each coset $x + K$ that projects into $Y$, we fix an arbitrary total order on $x + K$. If $x_1,x_2$ project onto distinct elements of $Y$, then we define $x_1 < x_2$ iff $\pi(x_1) < \pi(x_2)$. We now have a total order on $X$ that is also a good-ordering. \end{proof}

\begin{example} Every finite subset $X$ of an abelian group $G$ that avoids a subgroup $N$ of prime index (i.e. $X \cap N = \emptyset$), is arithmetically-free and good-ordered. \end{example}

So if a prime $p$ divides the order of a finite abelian group $G$, then $G$ has a good-ordered, arithmetically-free subset $X$ of density $1 - \frac{1}{p}$, (cf. example \ref{ExampleDensity}). \newline

A positive answer to the following problem would allow us to remove the condition of a good-ordering in theorem $4$.

\begin{openproblem} Does every arithmetically-free subset $X$ of an abelian group $(G,+)$ admit a good-ordering? \end{openproblem}

\section{Gradings with arithmetically-free support}

\paragraph{The finite-dimensional case.} Suppose $L$ is a \emph{finite}-dimensional Lie algebra and suppose that it admits a grading $\bigoplus_{g \in G} L_g$ with arithmetically-free support $X$. Then all homogeneous elements $y \in L_g$ are $\operatorname{ad}$-nilpotent of bounded degree: $$\operatorname{ad}_y^{|X|}(L) = \{ 0 \} .$$ Equivalently: all homogeneous elements satisfy the $|X|$-Engel property. So we may apply Jacobson's theorem about weakly-closed sets of nilpotent operators (for finite-dimensional vector spaces) to conclude that $L$ is nilpotent. Unfortunately, Jacobson's theorem gives only the trivial upper bound for the class ($c(L) < \dim(L)$), and 
it cannot be applied if $L$ is infinite-dimensional. We shall try to overcome these obstacles by relating the solvable and nilpotent series of $L$: $$ L^{(n)} \subseteq L^{2^n} \text{ and } L^{1+|X|^m} \subseteq L^{(m)}.$$ 

\paragraph{Gradings with the permutation-contraction property.} In this section we will be considering set-gradings of algebras with the permutation-contraction property. They are directly inspired by the weakly-closed sets of operators introduced by Jacobson, by the group-gradings of $(\alpha,\beta,\gamma)$-algebras introduced by Bergen and Grzeszczuk, by the polynomial products of Radjavi, and by the $(\Z_n,+)$-gradings of Lie-type algebras introduced by Bakhturin and Zaicev, and used by Makarenko: \cite{JacobsonWeaklyClosed,BergenGrzeszczuk,Radjavi,BahturinZaicev,Makarenko}. \newline

Let us introduce some notation. For elements $b,c$ in an algebra $A$, we define $\operatorname{alg}(b,c)$ to be the subalgebra of $A$ that is generated by $b$ and $c$. For a subset $B$ of $A$, we define $\langle B \rangle_A := \langle B \rangle$ to be the smallest right-sided ideal of $A$ that contains $B$.

\begin{definition} Consider an algebra $A$ over a commutative ring $K$ and a finite set $Y$. We say the decomposition $A = \bigoplus_{y \in Y} A_y $ of $A$ into $K$-submodules is a grading of $A$ by $Y$, iff there exists a map $\map{f}{Y \times Y}{Y}$ such that for all $x,y \in Y$ we have $$A_x \cdot A_y \subseteq A_{f(x,y)}.$$ We say it satisfies the \emph{permutation-contraction} property, iff: for all homogeneous elements $a,b,c$ of $A$, there exist $\gamma \in K$ and $z \in \operatorname{Span}_K\{ z_1 \cdot z_2 | z_1,z_2 \in \operatorname{alg}(b,c) \}$, such that: $$ (a \cdot b) \cdot c = \gamma (a \cdot c) \cdot b + a \cdot z.$$ \end{definition}

Every group-grading is a set-grading in the obvious way, but not every set-grading is a group-grading.

\begin{example} If $A$ is one of the following algebras, then every grading of $A$ satisfies the permutation-contraction property: (anti-)associative algebras, right-alternative algebras, right-Leibniz (Loday)-algebras, and in particular, Lie algebras. \end{example}

We recursively define the solvable and nilpotent series, as usual: $A^1 := A := A^{(0)}$, $A^{n+1} := A^{n} \cdot A$, and $A^{(n)} := A^{(n-1)} \cdot A^{(n-1)}$, for $n \in \N$. If a grading $\bigoplus_{x} A_x$ satisfies the permutation-contraction property, then each $A^{(n)}$ has the obvious grading satisfying the permutation-contraction property. Also: $(A^{(1)})^n \cdot A \subseteq (A^{(1)})^n$.

\paragraph{Arithmetically-free group-gradings with the permutation-contraction property.} For convenience, we shall be using the left-associative convention for products: $x_1 \cdot x_2 \cdot x_3 := ( x_1 \cdot x_2 ) \cdot x_3$, and for $k > 3$, we define $x_1 \cdot \ldots \cdot x_k \cdot x_{k+1} := ( x_1\cdot \ldots \cdot x_k ) \cdot x_{k+1}$. 

\begin{proposition} \label{PropositionPermutationContraction} Consider a group-grading $\bigoplus_{g \in G} A_x$ of an algebra $A$ with the permutation-contraction property. If the group is abelian and if the support $X$ is arithmetically-free, then $$A^{1 + \delta(X)^s} \subseteq A^{(s)}.$$\end{proposition}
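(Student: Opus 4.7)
The plan is to proceed by induction on $s$, with the base case $s = 0$ being immediate since $A^{1+1} = A^2 \subseteq A = A^{(0)}$.

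For the inductive step, writing $\delta := \delta(X)$ and assuming $A^{1+\delta^s} \subseteq A^{(s)}$, I would consider a typical homogeneous left-associated product $p = a_0 \cdot a_1 \cdots a_N$ of length $N = \delta^{s+1}$, with each $a_i \in A_{x_i}$ and $x_i \in X$. The approach is to partition the $N$ trailing factors into $\delta$ consecutive blocks $G_1, \ldots, G_\delta$ of size $\delta^s$ each, and to show that $p$ decomposes as a sum of elements of $A^{(s+1)} = A^{(s)} \cdot A^{(s)}$. A key initial observation, from the induction hypothesis, is that the prefix $a_0 \cdot G_1$ (of length $1+\delta^s$) already lies in $A^{(s)}$; more generally, after any permutation, the left-associated product of $a_0$ with $\delta^s$ or more subsequent factors lies in $A^{(s)}$.

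Next, I would use the permutation-contraction property to rearrange these blocks: each elementary swap of two adjacent factors replaces them, up to a permuted main term, by an element of $A^{(1)}$ from the subalgebra they generate. To kill one of the main terms, I would invoke Proposition~\ref{PropositionQuantitative} applied to $x_0$ together with the block degrees $y_j := \sum_{i \in G_j} x_i$ (all necessarily in $X$, else the corresponding block vanishes): this yields a subset $I \subseteq \{1, \ldots, \delta\}$ with $x_0 + \sum_{j \in I} y_j \notin X$. A rearrangement placing the $I$-blocks immediately after $a_0$ then makes the corresponding initial partial product homogeneous of a degree outside the support of $A$, hence zero. This forces $p$ itself to equal the accumulated corrections.

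The remaining, and technically hardest, step will be to verify that every correction term lies in $A^{(s+1)}$. The tools are the induction hypothesis applied to sub-products lying on either side of an inserted $A^{(1)}$-factor, together with the identity $(A^{(1)})^n \cdot A \subseteq (A^{(1)})^n$ recorded just above the statement of the proposition, which lets one absorb further right-multiplications by $A$ without leaving the relevant ideal. The main obstacle is the combinatorial bookkeeping: controlling how many permutation-contraction swaps are performed, tracking where in the product the resulting $A^{(1)}$-factors land, and ensuring that every correction admits a factorisation $u \cdot v$ with $u, v \in A^{(s)}$. This is essentially a version of Jacobson's weakly-closed-set technique adapted to algebras with the permutation-contraction property and an arithmetically-free support, in the spirit of Bergen--Grzeszczuk.
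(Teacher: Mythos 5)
Your outline correctly identifies the ingredients (induction on $s$, permutation--contraction to rearrange factors, Proposition~\ref{PropositionQuantitative} to annihilate a main term), but the step you defer as ``technically hardest'' is precisely where the argument, as set up, breaks down. Each elementary swap produces a correction of the form $(\text{prefix})\cdot z\cdot(\text{remaining factors})$ with $z$ only known to lie in $A^{(1)}$ (it is a product of two elements of $\operatorname{alg}(b,c)$). Even when the prefix lies in $A^{(s)}$, this places the correction in $A^{(s)}\cdot A^{(1)}$, which contains --- rather than is contained in --- $A^{(s+1)}=A^{(s)}\cdot A^{(s)}$ once $s\geq 2$; and the absorption identity $(A^{(1)})^{n}\cdot A\subseteq (A^{(1)})^{n}$ you invoke concerns the lower central series of $A^{(1)}$, not the derived series of $A$, so it cannot repair this. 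The paper's proof sidesteps the problem by changing the target of the inner argument: it first proves the lower-central-series statement $A^{2+l\delta}\subseteq (A^{(1)})^{l+1}$ --- there the corrections $(\text{prefix})\cdot z$ with prefix in $(A^{(1)})^{k}$ and $z\in A^{(1)}$ land exactly in $(A^{(1)})^{k+1}$, and further right-multiplications by $A$ are absorbed --- and only then recurses, applying the induction hypothesis to the graded algebra $A^{(1)}$ (whose support is contained in $X$, so still arithmetically-free) to convert $(A^{(1)})^{1+l}$ with $l=1+\delta+\cdots+\delta^{s-2}$ into $(A^{(1)})^{(s-1)}=A^{(s)}$. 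Without this two-series structure, no amount of bookkeeping will put the corrections into $A^{(s+1)}$.

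Two smaller problems. First, your application of Proposition~\ref{PropositionQuantitative} to the block degrees $y_j=\sum_{i\in G_j}x_i$ requires $y_j\in X$, which is not guaranteed: the only degrees forced into $X$ (on pain of the whole word vanishing) are the \emph{initial partial sums} $x_0+x_1+\cdots+x_k$ of the left-associated word, and a block degree is a difference of two such sums. Your fallback --- that the block ``vanishes'' otherwise --- is not valid, since a block is never a standalone sub-product of a left-associated word. The paper avoids this by always taking $a$ to be the (initial-partial-sum) degree of the current prefix and the $b_i$ to be the degrees of the next $\delta$ \emph{individual} factors. Second, blocks cannot be permuted atomically: permutation--contraction only exchanges the last two factors of a left-associated product, so moving a block of size $\delta^{s}$ costs on the order of $\delta^{2s}$ elementary swaps, each with its own correction term --- which feeds back into the first, fatal, issue.
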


Let us prove the slightly stronger statement $A^{1 + \delta(X)^0 + \delta(X)^1 + \cdots + \delta(X)^{s-1}} \subseteq A^{(s)}$.

\begin{proof} For $s = 1$, the statement is true by definition: $A^{1 + 1} := A^{(1)}$. So let us proceed by induction on $s > 1$. For $h_1,h_2,g_1,\ldots,g_{t} \in X$ and $t \in \N$, we consider the homogeneous left-associative word $W := a_{h_1} \cdot a_{h_2} \cdot a_{g_1} \cdot \ldots \cdot a_{g_t}$ in the non-trivial homogeneous elements $a_{h_1} \in A_{h_1},\ldots,a_{g_t} \in A_{g_t}$. For any $\pi \in \operatorname{Sym}(t)$, we may also consider the twisted product $$W^{\pi} := a_{h_{1}} \cdot a_{h_2} \cdot a_{g_{\pi(1)}} \cdot \ldots \cdot a_{g_{\pi(t)}}.$$ The permutation-contraction-property then implies that $W$ is in the span of $W^{\pi}$ and $(A \cdot A) \cdot (A \cdot A)$. If $t \geq \delta$, then proposition \ref{PropositionCharacterisationAF} implies that there exists a permutation $\pi \in \operatorname{Sym}(t)$ and a cut-off $t_0 \leq t$ such that $$h_1 + h_2 + g_{\pi(1)} + \cdots g_{\pi(t_0)} \in G \setminus X.$$ In particular: $W^{\pi}$ is in the right-sided ideal generated by $A_{G \setminus X} = \{ 0 \}$, and therefore $W^{\pi} = 0$. We conclude that $W \subseteq A^{(1)} \cdot A^{(1)}$. More generally: for $t := l \cdot \delta$, we obtain $l$ permutations, and we see that $$A^{2 + l \delta} \subseteq (A^{(1)})^{l+1}.$$ Define $l := 1 + \delta + \cdots + \delta^{s-2}$. Then $1 + \delta + \cdots + \delta^{s-1} = 1 + \delta l$, and we may apply the induction hypothesis: $$A^{1 + 1 + \delta l} \subseteq (A^{(1)})^{1 + l} \subseteq (A^{(1)})^{(s-1)} := A^{(s)}. \qedhere$$ \end{proof}

The proof is very similar to the one given by Kreknin and Kostrikin for the special case of example \ref{ExampleUnits} (the group of prime order).

\paragraph{Set-gradings with the $m$-Engel property.} We will again be using the left-associative convention for Lie brackets: $[x_1,x_2,x_3] := [[x_1,x_2],x_3]$, and more generally: $[x_1,\ldots,x_{n+1}] := [[x_1,\ldots,x_n],x_{n+1}]$.

\begin{proposition} \label{PropositionSetGrading} Let $m,n,s \in \N$. Consider a grading $\bigoplus_{y \in Y} L_y$ of a Lie algebra $L$ by a \emph{set} $Y$ of size $|Y| = n$. Suppose that either of the following two claims holds.
\begin{enumerate}[(a)]
\item All the homogeneous components $L_y$ act $m$-nilpotently: $$\operatorname{ad}_{L_y}^m(L) = \{ 0 \}.$$
\item All the homogeneous elements $v_y$ are $m$-Engel, $$\operatorname{ad}_{v_y}^m(L) = \{ 0 \}$$ and the characteristic of $L$ does not divide $m!$.
\end{enumerate}
Then $$L^{1 + n^s \cdot m^{s}} \subseteq L^{(s)}.$$
\end{proposition}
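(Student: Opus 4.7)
The plan is to mirror the proof of Proposition~\ref{PropositionPermutationContraction} very closely, with two substitutions: the arithmetic-freeness of the support is replaced by a pigeonhole argument on the finite grading set $Y$ of size $n$, and in case (b) we insert a polarization step, where the characteristic hypothesis enters. First I would record that Lie algebras satisfy the permutation-contraction property, since $[[a,b],c] = [[a,c],b] + [a,[b,c]]$ with $\gamma = 1$ and $z = b \cdot c \in \operatorname{alg}(b,c)$; consequently, inside any left-normed product $W = [a, b_{1}, \ldots, b_{t}]$ whose head $a$ already belongs to $L^{(1)}$, the tail may be permuted freely modulo correction terms in $L^{(1)} \cdot L^{(1)} = L^{(2)}$, exactly as in the proof of Proposition~\ref{PropositionPermutationContraction}.

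I would then induct on $s$, proving the slightly stronger bound
\[
L^{1 + (nm)^{0} + (nm)^{1} + \cdots + (nm)^{s-1}} \;\subseteq\; L^{(s)},
\]
which implies the stated inequality. The base case $s=1$ is $L^{2} = L^{(1)}$. For the inductive step I would prove the intermediate statement $L^{2 + l \cdot nm} \subseteq (L^{(1)})^{l+1}$ for every $l \geq 0$ (the analogue of the inner recursion in Proposition~\ref{PropositionPermutationContraction}), and then apply the inductive hypothesis to the graded ideal $L^{(1)}$ with $l := 1 + nm + \cdots + (nm)^{s-2}$: the ideal $L^{(1)}$ inherits the set-grading by $Y$ and still satisfies the $m$-nilpotency or $m$-Engel assumption on $L^{(1)} \subseteq L$, so $(L^{(1)})^{l+1} \subseteq (L^{(1)})^{(s-1)} = L^{(s)}$.

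The intermediate statement is proved by partitioning the tail of length $l \cdot nm$ into $l$ consecutive blocks of $nm$ homogeneous entries and processing the blocks one at a time. Inside each block, pigeonhole on $Y$ produces $m$ entries sharing a common grade $y^{*} \in Y$, and a permutation gathers them together. In case (a), the assumption $\operatorname{ad}_{L_{y^{*}}}^{m}(L) = \{0\}$ kills the gathered block directly. In case (b), each $v \in L_{y^{*}}$ only satisfies $\operatorname{ad}_{v}^{m} = 0$, but since $\operatorname{char} L$ does not divide $m!$, this homogeneous identity of degree $m$ polarizes to the symmetric multilinear identity
\[
\sum_{\sigma \in \operatorname{Sym}(m)} \operatorname{ad}_{v_{\sigma(1)}} \operatorname{ad}_{v_{\sigma(2)}} \cdots \operatorname{ad}_{v_{\sigma(m)}} \;=\; 0 \text{ on } L, \quad v_{1}, \ldots, v_{m} \in L_{y^{*}}.
\]
Summing the $m!$ re-orderings of the $m$ gathered entries, each equal to the original modulo $L^{(2)}$ by the permutation-contraction property, and then dividing by the invertible $m!$, one recovers the conclusion of case (a) up to an absorbed correction in the next derived layer.

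The main technical obstacle, I anticipate, is the bookkeeping: precisely tracking how the corrections generated by the $l$ successive block-reorderings accumulate so as to land in $(L^{(1)})^{l+1}$, rather than merely in $L^{(2)}$. This is the step where the inner recursion in Proposition~\ref{PropositionPermutationContraction} must be transcribed carefully to the Engel setting; the polarization in case (b) is conceptually clean, provided one sets it up so that the symmetric identity is applied on the innermost chunk, with the surrounding left-normed product treated as an ambient homogeneous element of the appropriate level of the derived series.
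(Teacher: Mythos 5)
Your proposal is essentially the paper's proof: the same two--level induction (an inner claim of the form $L^{2+l r}\subseteq (L^{(1)})^{l+1}$ with $r$ a pigeonhole bound close to $nm$, followed by induction along the derived series applied to the inherited grading of $L^{(1)}$), and the same treatment of case (b) by linearising $\operatorname{ad}_{v}^{m}=0$ to $\sum_{\pi}\operatorname{ad}_{v_{\pi(1)}}\cdots\operatorname{ad}_{v_{\pi(m)}}=0$ and dividing by the invertible $m!$. The only organisational difference is in the inner combinatorial step: you explicitly gather the $m$ equal-grade entries found by pigeonhole in each block, whereas the paper fixes a total order on $Y$, takes a lexicographically \emph{maximal} homogeneous word outside the relevant ideal, and observes that such a word must either contain $m$ consecutive equal grades (killed by the linearised Engel identity) or an ascent (contradicting maximality via permutation-contraction); this sidesteps your explicit bookkeeping. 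The ``main technical obstacle'' you flag is resolved in the paper by phrasing everything inside the right-sided ideals $\langle\operatorname{ad}_{L^{(1)}}^{s}(a)\rangle$ together with the remark that $(L^{(1)})^{k}\cdot L\subseteq (L^{(1)})^{k}$: each processed block contributes exactly one factor from $L^{(1)}$, and a left-normed word whose head and $l$ of whose entries lie in the ideal $L^{(1)}$ automatically lies in $(L^{(1)})^{l+1}$. With that observation inserted, your argument closes.
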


\begin{proof} Define $r := n \cdot (m-1) + 1$.  Let us prove the slightly stronger statement $$L^{1 + r^0 + r^1 + \cdots r^{s-1}} \subseteq L^{(s)}.$$ Choose any total order $y_1 < y_2 < \cdots < y_n$ on the elements of $Y$. The lexicographical order on all finite sequences on $Y$ is then a total order. This gives us a partial order on the (left-associative) homogeneous words: $W < W'$, iff the corresponding sequences $S$ and $S'$ on $Y$ satisfy $S < S'$. \newline

\emph{Claim:} For every $a \in L$ and $y \in Y$, we have $\operatorname{ad}_{L_y}^m (a) \subseteq \langle \operatorname{ad}_{L^{(1)}}(a) \rangle.$ \newline

In case $(a)$ there is nothing to prove. In case $(b)$, we may use the standard linearisation argument: for all $b_1,\ldots,b_m \in L_y$ we have
$$ 0 = \sum_{\pi \in \operatorname{Sym}(m)} [a , b_{\pi(1)} , \cdots , b_{\pi(m)}] ,$$ so that $m! [ a , b_1 , \cdots  , b_m] \in \langle \operatorname{ad}_{L^{(1)}}(a) \rangle.$ \newline

\emph{Claim:} For every $a \in L$ and $s \in \N$, we have $\operatorname{ad}_{L}^{s r}(a) \subseteq \langle \operatorname{ad}_{L^{(1)}}^s(a) \rangle.$ \newline

Suppose the result is not true for $s := 1$. Then there exists a homogeneous word $W := [a , b_1 , \cdots , b_r] \not \in \langle \operatorname{ad}_{L^{(1)}}(a) \rangle$, with $b_1 \in L_{y_1},\ldots,b_r \in L_{y_r}$. We may suppose that $W$ is maximal under these conditions. If there are $m$ consecutive $y_j$ that are equal, we may apply the previous claim to conclude that $W \in \langle \operatorname{ad}_{L^{(1)}}(a) \rangle$. This contradicts the choice of $W$. So there exist $y_j < y_{j+1}$. Then $W < W^{(j,j+1)} \in \langle \operatorname{ad}_{L^{(1)}}(a) \rangle$, and we may apply the permutation-contraction property:
$$W \in \F W^{(j,j+1)} + \langle \operatorname{ad}_{L^{(1)}}(a) \rangle \subseteq \langle \operatorname{ad}_{L^{(1)}}(a) \rangle.$$ This again contradicts the choice of $W$. We conclude that $\operatorname{ad}_{L}^{r}(a) \subseteq \langle \operatorname{ad}_{L^{(1)}}(a) \rangle$. \newline

Let us now induct on $s > 1$. Consider an arbitrary homogeneous word $$W := [a , b_1 , \cdots , b_{(s-1)r} , d_1 , \cdots , d_r] := [V , d_1 , \cdots , d_r] ,$$ with $V := [a , b_1 , \cdots , b_{(s-1)r}]$. The induction hypothesis corresponding with $s-1$, applied to $V$ gives $$V \subseteq \langle \operatorname{ad}_{L^{(1)}}^{s-1}(a) \rangle,$$ so that the base of the induction applied to $W$ implies $$W \subseteq \langle \operatorname{ad}_{L^{(1)}}(V) \rangle \subseteq \langle \operatorname{ad}_{L^{(1)}}^s(a) \rangle.$$

\emph{Claim:} For every $s \in \N$, we have $L^{2 + s r} \subseteq (L^{(1)})^{1 + s}.$ \newline

This follows from the previous claim if we specialise $a := [a_1 , a_2]$. \newline

\emph{Claim:} For every $s \in \N$, we have $L^{1 + r^0 + \cdots + r^{s-1}} \subseteq L^{(s)}.$ \newline

For $s:=1$, the claim is true by definition. So we may suppose that $s > 1$ and use induction. Note that $1 + r^0 + \cdots + r^{s-1} = 2 + r(r^0 +\cdots + r^{s-2})$. The previous claim, combined with the induction hypothesis then gives: $$ L^{1 + r^0 + \cdots + r^{s-1}} \subseteq (L^{(1)})^{1 + r^0 +\cdots + r^{s-2}} \subseteq (L^{(1)})^{(s-1)} : = L^{(s)}.\qedhere$$
\end{proof}

\begin{remark} If we specialise $n := 1$, then we recover Higgins' theorem for $m$-Engel algebras, \cite{Higgins}. \end{remark}

\paragraph{Abelian subgroups of the grading group.} Consider a grading $\oplus_g L_g$ of a Lie algebra $L$. For a subset $H$ of $(G,\cdot)$ we define $L_H$ to be the homogeneous subspace of $L$ spanned by the elements $L_h$, with $h \in H$. (If $H$ is a subgroup of $G$, then $L_H$ will even be a homogeneous subalgebra of $L$.) For a group $(G,\cdot)$, we let $\operatorname{Ab}(G)$ be the family of all abelian subgroups of $G$.

\begin{lemma}[Reduction to abelian subgroups] \label{ReductionAbelian} Consider a grading $\oplus_{g \in G} L_g$ of a Lie algebra $L$ over a field $\F$ by an arbitrary group $(G,\cdot)$. Then for every $n \in \N$ we have $$L^n = \operatorname{Span}_{\F} \{ L_A^n | A \in \operatorname{Ab}(G) \} \text{ and } L^{(n)} = \operatorname{Span}_{\F} \{ L_A^{(n)} | A \in \operatorname{Ab}(G) \}. $$ In particular: if $L_A^n = \{ 0 \}$ for all $A \in \operatorname{Ab}(G)$, then $L^n = \{ 0 \}$. Similarly: if $L_A^{(n)} = \{ 0 \}$ for all $A \in \operatorname{Ab}(G)$, then $L^{(n)} = \{ 0 \}$. \end{lemma}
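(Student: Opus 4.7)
The plan is to reduce both identities to a single key observation about left-normed brackets in group-graded Lie algebras.

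\textbf{Key Lemma.} \emph{If $W := [x_1, x_2, \ldots, x_m]$ is a left-associative bracket of homogeneous elements $x_i \in L_{g_i}$ and $W \neq 0$, then the grades $g_1, \ldots, g_m$ pairwise commute in $G$.}

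I would prove this by induction on $m$. The cases $m \leq 2$ are immediate from antisymmetry and the directness of the homogeneous decomposition: if $gh \neq hg$, then $[L_g, L_h] \subseteq L_{gh} \cap L_{hg} = \{0\}$. For $m \geq 3$, nonvanishing of $W$ forces $[x_1, \ldots, x_{m-1}] \neq 0$, so by the inductive hypothesis $g_1, \ldots, g_{m-1}$ pairwise commute. Suppose for contradiction that some $g_i$ (with $i \leq m-1$) fails to commute with $g_m$. Expanding $W = -[x_m, [x_1, \ldots, x_{m-1}]]$ through the derivation $\operatorname{ad}_{x_m}$ yields
$$W = -\sum_{j=1}^{m-1}[x_1, \ldots, x_{j-1}, [x_m, x_j], x_{j+1}, \ldots, x_{m-1}].$$
Summands with $g_j \not\in C_G(g_m)$ vanish because $[x_m, x_j] = 0$. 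The remaining summands are left-associative brackets of length $m-1$ with grades $g_1, \ldots, g_m g_j, \ldots, g_{m-1}$, and by the inductive hypothesis they vanish unless these pairwise commute; working out the commutation condition (and using that $g_j, g_k$ already commute for $k \neq j$) forces $g_k \in C_G(g_m)$ for every $k \neq j$, and in particular $g_i \in C_G(g_m)$, a contradiction.

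Granted the Key Lemma, the first identity follows immediately: $L^n$ is the $\F$-span of left-associative brackets $[x_1, \ldots, x_n]$ over a homogeneous basis, and by the Key Lemma each nonzero such summand has grades generating an abelian subgroup $A := \langle g_1, \ldots, g_n \rangle \in \operatorname{Ab}(G)$, hence lies in $L_A^n$. The reverse inclusion is clear since $L_A$ is a homogeneous subalgebra of $L$. For the derived series, I would extend the Key Lemma to arbitrary Lie monomials: any nested bracket on homogeneous elements, if nonzero, has pairwise commuting grades, because Jacobi lets one expand it as a linear combination of left-associative brackets, each vanishing by the Key Lemma unless the grades commute. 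Since $L^{(n)}$ is the $\F$-span of balanced brackets of depth $n$---nested brackets on $2^n$ homogeneous elements---each nonzero value must have its $2^n$ grades generating an abelian $A$, and then lies in $L_A^{(n)}$ as $L_A$ is a subalgebra stable under the balanced-bracket operation. The reverse inclusion is again immediate. The main subtlety is the induction step of the Key Lemma: one must notice that the ``bad'' index $i$ may differ from the summation index $j$, yet the inductive hypothesis applied to the $(m-1)$-letter bracket still forces every remaining grade---including $g_i$---to commute with $g_m$.
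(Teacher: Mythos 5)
Your proposal is correct and matches the paper's proof in all essentials: the same key lemma (a nonzero left-normed homogeneous bracket forces its grades to commute pairwise, proved by induction on the length, with the base case coming from $[L_g,L_h]\subseteq L_{gh}\cap L_{hg}$), followed by the same deduction of the $L^n$ statement and the same reduction of $L^{(n)}$ to left-normed brackets via the Jacobi identity. The only divergence is in the induction step of the key lemma, where you expand $[x_m,[x_1,\dots,x_{m-1}]]$ through the derivation property of $\operatorname{ad}_{x_m}$ and kill each summand, whereas the paper applies the inductive hypothesis to $[L_{g_1 g_2},L_{g_3},\dots,L_{g_n}]$ and then to a Jacobi-permuted bracket beginning with $L_{g_n}$; both rest on the same cancellation in $G$ (if $g_m g_j$ and $g_j$ each commute with $g_k$, so does $g_m$), and your handling of the case where the offending index equals the summation index is sound, since those summands already vanish because $[x_m,x_j]=0$.
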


The proof is straightforward, but we include it for completeness.

\begin{proof} Let us first use induction on $n \in \N$ to prove the implication $$\{ 0 \} \neq [L_{g_1},\ldots,L_{g_n}] \implies g_1,\ldots,g_n \text{ commute pairwise}.$$ If $g_1$ and $g_2$ are elements of $G$, we may consider the commutator subspaces $[L_{g_1},L_{g_2}] \subseteq L_{g_1 \cdot g_2}$ and $[L_{g_2},L_{g_1}] \subseteq L_{g_2 \cdot g_1}$ of $L$. Since the bracket of $L$ is anti-commutative, we have $[L_{g_1},L_{g_2}] , [L_{g_2},L_{g_1}] \subseteq L_{g_1 \cdot g_2} \cap L_{g_2 \cdot g_1}$. If $g_1 \cdot g_2 \neq g_2 \cdot g_1$, then $L_{g_1 \cdot g_2} \cap L_{g_2 \cdot g_1} = \{ 0 \}$, so that $[L_{g_1},L_{g_2}] = 0 = [L_{g_2},L_{g_1}]$. For $n > 2$ we use to induction hypothesis to conclude that $g_1,\ldots,g_{n-1}$ commute pairwise. So it suffices to show that $g_n$ commutes with each of the $g_1,\ldots,g_{n-1}$. Since $$\{ 0 \} \neq [L_{g_1},\ldots,L_{g_n}] \subseteq [L_{g_1 \cdot g_2},L_{g_3},\ldots,L_{g_n}],$$ the induction hypothesis guarantees that $g_1 \cdot g_2,g_3,\ldots,g_{n-1}$ commute with $g_n$. The Jacobi-identity implies that there exists a permutation $\pi \in \operatorname{Sym}(n-1)$ such that $$\{ 0 \} \neq [L_{g_n},L_{g_{\pi(1)}},\ldots,L_{g_{\pi(n-2)}},L_{g_{\pi(n-1)}}].$$ The induction hypothesis implies that $g_n$ commutes with the elements of $V =: \{ g_{\pi(1)},\ldots,g_{\pi(n-2)} \}$. Since $V \cap \{ g_1,g_2\} \neq \emptyset$, we may assume without loss of generality that $g_2 \in V$. In particular: $g_2 \cdot g_n = g_n \cdot g_2$. Since $ g_1 \cdot g_n \cdot g_2 = (g_1 \cdot g_2) \cdot g_n = g_n \cdot (g_1 \cdot g_2)$, we obtain $g_1 \cdot g_n = g_n \cdot g_1$. This finishes the induction. \newline

We now note that $L^n$ is spanned by the homogeneous subspaces $[L_{g_1},\ldots,L_{g_n}]$. By the above, $L^n$ is spanned by the subspaces $[L_{g_1},\ldots,L_{g_n}]$ for which $g_1,\ldots,g_n$ all commute. This proves the first claim. The second claim follows from the first and the fact that any (not necessarily standard) Lie bracket in the spaces $L_{g_1}, \ldots, L_{g_n}$ is a linear combination of brackets $[L_{g_{\pi(1)}},\ldots,L_{g_{\pi(n)}}]$, where $\pi$ runs over the elements of $\operatorname{Sym}(n)$.
\end{proof}

\paragraph{Main results.}

\begin{theorem} Consider a group-graded Lie algebra $L$. If the support $X$ is arithmetically-free and if $X$ has a good-ordering, then $L$ is nilpotent and $$c(L) \leq \delta(X)^0 + \delta(X)^1 + \cdots + \delta(X)^{2^{|X|-1}-2} \leq |X|^{2^{|X|-1}-1}.$$ \label{MainTheoremGrading} \end{theorem}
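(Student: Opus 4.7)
The plan is to combine the three key tools assembled earlier in the paper: the reduction to abelian subgroups of the grading group (Lemma \ref{ReductionAbelian}), the solvability bound of Borel--Mostow--Kreknin--Shalev (Theorem \ref{TheoremShalev}), and the permutation-contraction estimate (Proposition \ref{PropositionPermutationContraction}) that bounds a nilpotent term by a solvable term. Since Lie algebras satisfy the permutation-contraction property, I would set $c := 1 + \delta(X)^0 + \delta(X)^1 + \cdots + \delta(X)^{2^{|X|-1}-2}$ and aim to prove $L^c = \{0\}$.

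First I would reduce to the abelian case. By Lemma \ref{ReductionAbelian}, it is enough to show $L_A^c = \{0\}$ for every abelian subgroup $A$ of $G$. Fix such an $A$ and write $X_A := X \cap A$, so that $L_A = \bigoplus_{a \in A} L_a$ is a grading of $L_A$ by the abelian group $A$ with support exactly $X_A$. By the definitions in Section~2, $X_A$ is arithmetically-free in $A$ (this is built into the definition of arithmetically-free for arbitrary groups), $X_A$ is a good-ordered abelian subset of $X$ (hence carries a good-ordering), and $0 \notin X_A$ (since $0 \in P_A(X_A)$ whenever $X_A$ is nonempty).

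Next I would apply the two quantitative inputs to $L_A$. Theorem \ref{TheoremShalev} gives the solvability bound
$$ L_A^{(s_A)} = \{0\}, \qquad s_A := 2^{|X_A|-1}-1 \leq 2^{|X|-1}-1 =: s. $$
Proposition \ref{PropositionPermutationContraction}, applied in the slightly stronger form actually proved (namely $L_A^{1 + \delta(X_A)^0 + \cdots + \delta(X_A)^{s-1}} \subseteq L_A^{(s)}$), then yields
$$ L_A^{1 + \delta(X_A)^0 + \delta(X_A)^1 + \cdots + \delta(X_A)^{s-1}} \subseteq L_A^{(s)} \subseteq L_A^{(s_A)} = \{0\}. $$
Since $\delta(X_A) \leq \delta(X)$ and the exponents on the left are monotone in $\delta(X_A)$, this in turn forces $L_A^c = \{0\}$.

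Finally, invoking Lemma \ref{ReductionAbelian} a second time gives $L^c = \{0\}$, so $L$ is nilpotent with
$$ c(L) \leq c - 1 = \delta(X)^0 + \delta(X)^1 + \cdots + \delta(X)^{2^{|X|-1}-2}, $$
and the crude estimate $\delta(X) \leq |X|$ (Proposition \ref{PropositionQuantitative}) yields the stated bound $|X|^{2^{|X|-1}-1}$. There is no real obstacle beyond bookkeeping: the only subtle point is checking that arithmetic-freedom and the good-ordering both pass uniformly to each slice $X_A$, and that the two quantitative bounds are monotone in $|X|$ and $\delta(X)$ so that taking the supremum over abelian $A \leq G$ causes no loss.
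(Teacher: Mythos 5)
Your proposal is correct and follows essentially the same route as the paper: reduce to abelian subgroups via Lemma \ref{ReductionAbelian}, apply Theorem \ref{TheoremShalev} to get solvable length at most $2^{|X|-1}-1$, and then feed that into Proposition \ref{PropositionPermutationContraction} to convert solvability into the nilpotency bound. You are in fact slightly more careful than the paper's own two-line proof, since you correctly note that the first inequality requires the \emph{stronger} form $A^{1+\delta^0+\cdots+\delta^{s-1}} \subseteq A^{(s)}$ established inside the proof of that proposition, and that $\delta(X_A) \leq \delta(X)$ and $|X_A| \leq |X|$ make the bounds uniform over all abelian slices.
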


\begin{proof} Lemma \ref{LemmaReductionAbelian} allows us to assume that the grading group is abelian. Since $X$ is arithmetically-free, the neutral element of the group does not belong to $X$. Since $X$ has a good-ordering, we may apply Shalev's theorem to conclude that $L$ is solvable of length at most $2^{|X|-1}-1$. Proposition \ref{PropositionPermutationContraction} then gives the first inequality. The second inequality follows from $\delta(X) \leq |X|$ of proposition \ref{PropositionCharacterisationAF}. \end{proof}

\begin{remark} If we replace proposition \ref{PropositionPermutationContraction} with proposition \ref{PropositionSetGrading} in the proof, and observe that $\nu(X) \leq |X|$, we obtain the upper bounds $$c(L) \leq (\nu(X) \cdot |X|)^{2^{|X|-1}-1} \leq |X|^{2^{|X|}-2}.$$ \end{remark}

We recall that every finite subgroup of the multiplicative group $(\F^\times,\cdot) := (\F \setminus \{ 0 \},\cdot)$ of a field $\F$ is \emph{cyclic} (cf. example \ref{ExampleMultiplicativeField}). This implies that every finitely-generated subgroup of $(\F^\times,\cdot)$ has a good-ordering, so that we obtain:

\begin{corollary} Consider a Lie algebra $L$, and suppose that it is graded by the multiplicative group $(\F^\times,\cdot)$ of a field $\F$. If the support $X$ is arithmetically-free, then $L$ is nilpotent, and $$c(L) \leq \delta(X)^{2^{|X|-1}-1} \leq |X|^{2^{|X|-1}-1}.$$ \label{CorollaryGrading}\end{corollary}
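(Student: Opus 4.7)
The plan is to reduce the corollary directly to Theorem~\ref{MainTheoremGrading}; the only thing to verify is that the support $X$ admits a good-ordering, since arithmetic-freedom is already assumed. The route to this is the classical fact that finite subgroups of $(\F^\times,\cdot)$ are cyclic, which feeds into the lemma characterising good-orderable finitely-generated abelian groups.

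First I would pass from $X$ to its generated subgroup. Since $X$ is a finite subset of $(\F^\times,\cdot)$, the subgroup $\langle X \rangle$ is abelian and finitely-generated. Its torsion subgroup $T$ consists of roots of unity in $\F$ and is therefore a finite subgroup of $(\F^\times,\cdot)$; by the classical theorem recalled in Example~\ref{ExampleMultiplicativeField}, $T$ is cyclic. By the lemma preceding that example, a finitely-generated abelian group admits a good-ordering if and only if its torsion subgroup is cyclic, so $\langle X \rangle$ is good-ordered, and by restriction so is $X$.

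With both hypotheses of Theorem~\ref{MainTheoremGrading} now satisfied, I invoke it to conclude that $L$ is nilpotent of class at most $\delta(X)^0+\delta(X)^1+\cdots+\delta(X)^{2^{|X|-1}-2}$. This geometric-type sum is dominated by $\delta(X)^{2^{|X|-1}-1}$ (checking the edge case $\delta(X)=1$ separately, where $X$ is product-free and $L$ is already abelian). Finally, the last inequality of the corollary follows from $\delta(X)\leq |X|$, which is part of Proposition~\ref{PropositionCharacterisationAF}.

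There is really no significant obstacle here: the corollary is a packaging statement, and all the work has been done upstream. The only substantive ingredient beyond Theorem~\ref{MainTheoremGrading} is the cyclicity of finite subgroups of $(\F^\times,\cdot)$, which bridges the hypothesis on the grading group to the structural condition (cyclic torsion) needed for good-ordering. If one wanted to sharpen the bound, the place to look would be the passage from the geometric sum to its closed-form upper estimate, but this is a cosmetic simplification rather than a step requiring new ideas.
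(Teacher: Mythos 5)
Your proposal is correct and follows essentially the same route as the paper: the paper likewise derives the good-ordering of $X$ from the cyclicity of finite subgroups of $(\F^\times,\cdot)$ (via the lemma that a finitely-generated abelian group is good-ordered iff its torsion subgroup is cyclic) and then invokes Theorem~\ref{MainTheoremGrading}. Your extra care in restricting the good-ordering from $\langle X\rangle$ to $X$ and in checking the $\delta(X)=1$ edge case when replacing the geometric sum by $\delta(X)^{2^{|X|-1}-1}$ is a welcome but inessential refinement of what the paper leaves implicit.
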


In view of the Khukhro-Makarenko-Shumyatsky theorem of \cite{KhukhroMakarenkoShumyatsky}, it makes sense to describe the structure of Lie algebras that admit a group-grading that is almost-arithmetically-free in the following sense:

\begin{openproblem} Consider a group-graded Lie algebra $L = \bigoplus_{g \in G} L_g$ and a subset $X$ of $G$. If $X$ is arithmetically-free and good-ordered, then $L$ has a nilpotent ideal $N$ of $|X|$-bounded class and $(|X|,\dim(L_{G \setminus X}))$-bounded codimension. \end{openproblem}

\section{Interpretation}

In this section we will apply the theorems (corollary) to the examples of section one.

\paragraph{Regular automorphisms of prime order.} Suppose that a Lie ring $L$ admits a periodic automorphism $\alpha$ of prime order $p$. After extending the scalars, the eigenspace decomposition of $L$ with respect to $\alpha$ is a $(\Z_p,+)$-grading. If the automorphism is also fix-point-free, then the support $X$ of the grading is contained in $\Z_p^\times$. So we have arrived at \emph{Example \ref{ExampleUnits}}. \newline

If $L$ is finite-dimensional over a field of characteristic zero, then the nilpotency follows from \cite{BorelSerre}. If $L$ is finite-dimensional over a field of arbitrary characteristic, then the nilpotency of $L$ follows from \cite{Jacobson}. If there is no restriction on the dimension of $L$ or on the characteristic of the field, then $p$-bounded nilpotency follows from \cite{Higman}: $c(L) \leq h_p$. As already mentioned in the introduction, the explicit upper bound $c(L) \leq \frac{(p-1)^{2^{p-1}-1}-1}{p-2}$ was given in \cite{KrekninKostrikin}. And the upper bound $c(L) \leq |X|^0 + |X|^1 + \cdots + |X|^{2^{|X|}-2}$ was given in \cite{KhukhroSupport}.

\paragraph{Algebras of derivations acting without constants.} Suppose that a finite-dimensional Lie algebra $L$ of characteristic zero admits a nilpotent algebra of derivations. After extending the scalars, the weight-decomposition of $L$ is a $(\Z^m,+)$-grading of $L$. If the derivation algebra also acts without constants, then the support $X$ is contained in $\Z^m \setminus \{ 0 \}$. So we have arrived at \emph{Example \ref{ExampleTorsionFree}}. The nilpotency was proven in \cite{Jacobson}. In \cite{BergenGrzeszczuk}, an ordering on $\Z^m$ that is stronger than a good-ordering was used to obtain an implicit upper bound of the form $$ |X|^2 ( |X|! e + |X| ) .$$

\paragraph{Automorphisms with eigenvalues of infinite order.} Consider a finite-dimensional Lie algebra $L$ with an automorphism $\alpha$. After extending the scalars, we see that the eigenspace decomposition of $L$ with respect to $\alpha$ is a $(\F^\times,\cdot)$-grading of $L$. If none of the eigenvalues is a root of unity, then we have arrived at \emph{Example \ref{ExampleOrder}}. The nilpotency of such an algebra was proven in \cite{Jacobson}. 

\paragraph{The properties of Lie algebras with periodic derivations.} We offer some final remarks on how our results about arithmetically-free sets, the existence of periodic regular transformations, and the co-class conjectures of Leedham-Green and Newman are related, \cite{LGN}. \newline

In \cite{ShalevZelmanov}, Shalev and Zel'manov reduced conjecture $C$ (pro-$p$ groups of finite co-class are solvable) to proving the existence of a periodic automorphism of a Lie ring that fixes only the trivial element. Shalev later reduced conjecture $A$ (every $p$-group of co-class $r$ has a normal subgroup of class at most $2$ and $(p,r)$-bounded index) to proving the existence of a periodic derivation of a Lie ring of order $p-1$, \cite{ShalevCoclass}. \newline

We have already mentioned that if a finite-dimensional Lie algebra $L$ of characteristic zero admits a periodic derivation (or, more generally, a non-singular derivation), then $L$ is nilpotent. Indeed: Burde and the author have classified all finite-dimensional, complex Lie algebras with a periodic derivation; they turn out to be nilpotent of class at most two, \cite{BurdeMoensPeriodic}. But if the characteristic of the algebra is non-zero, then the situation is very different: there are finite-dimensional, \emph{simple} Lie algebras of characteristic $p > 0$, admitting a periodic derivation (of order $p^m-1$). (Such algebras have in fact been classified by Benkart-Kostrikin-Kuznetsov for $p > 7$, \cite{BKK}.) These examples illustrate that in characteristic $p > 0$, the derivation's eigenvalues do not always form an arithmetically-free set. In view of this observation, one must then ask the following question (which Shalev did without using the current terminology, \cite{ShalevOrder}):

\begin{openproblem}[Shalev] For which prime $p$ and natural $n$ is the set $\mathbb{X}_{n,p} := \{ x \in \overline{\mathbb{F}}_p | x^n = 1 \}$ of $n$'th roots of unity arithmetically-free in the group $(\overline{\mathbb{F}}_p,+)$? \end{openproblem}

If $n = p-1$, then \emph{Example \ref{ExampleOrder}} shows that $\mathbb{X}_{p-1,p}$ is indeed arithmetically-free in $(\overline{\mathbb{F}}_p,+)$. A more systematic study of this problem by Shalev and Mattarei appears in \cite{ShalevOrder,Mattarei,Mattarei2}.

\paragraph{Criterium.} In order to help us test whether $\mathbb{X}_{n,p}$ is arithmetically-free, we introduce some integer-valued invariants. To a given a polynomial $P$ over a field $\F$, %we consider its set of roots $X$ in $\overline{\F}$ and associate to $P$ 
and $r \in \N$, we associate the scalar $$ \Delta(P,r) := \prod_{\substack{a,b \in \overline{\F}\\P(a) = P(b) = 0}} \sum_{1 \leq t \leq r} P(a + t b) \in \overline{\F}. $$ If $\Delta(P,r) \neq 0$, then the set of roots $X := \{ a \in \overline{\F} | P(a) = 0 \}$ is an arithmetically-free subset of $(\overline{\F},+)$ and $\nu(X) \leq r$. Let us consider in particular the polynomial $P(z) := z^n - 1$. We then see that $\Delta(z^n-1,r)$ is a power of the circulant determinant
$$ \Delta_{n,r} := \det \operatorname{Circ}_{0 \leq t \leq n-1} \left( \binom{n}{t} (1^{n-t} + 2^{n-t} + \cdots + r^{n-t} ) \right) \in \Z. $$ 

\begin{example}[$\mathbb{X}_{6,p}$] Let $\omega$ be a primitive sixth root of unity. Then $\omega^2 + 1 = \omega$, so that $\mathbb{X}_{6,p}$ is not sum-free. Indeed: $\Delta_{6,1} = 0$. Since the set $\mathbb{X}_{6,2}$ is not sum-free, it is also not arithmetically-free. But by comparing the prime decompositions of the determinants $\Delta_{6,r}$, for $2 \leq r \leq 6$, we obtain: $\nu(\mathbb{X}_{6,7}) \leq 6$, $\nu(\mathbb{X}_{6,283}) \leq 4$, $\nu(\mathbb{X}_{6,113}) \leq 3$, and $\nu(\mathbb{X}_{6,p}) = 2$ for every remaining odd prime $p$. In particular: $\mathbb{X}_{6,p}$ is arithmetically-free in $(\overline{\mathbb{F}}_p,+)$, iff $p \neq 2$. \end{example}

The criterium for $r := 1$ and $P(z) := z^n-1$ yields \emph{sum-free} sets: $\nu(\mathbb{X}_{n,p}) = 1$ (cf. \emph{Example} \ref{ExampleOrder} and \ref{ExampleSumFree}). It corresponds with the theorem:

\begin{theorem}[Kostrikin-Kuznetsov] Consider a Lie algebra $L$ over a field of characteristic $p \geq 0$ with a derivation of order $n \in \N$. If $p \not | \Delta_{n,1}$, then $L$ is abelian. \end{theorem}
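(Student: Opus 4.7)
The plan is to interpret the derivation as producing an eigenspace grading of $L$ by the additive group $(\overline{\F}_p,+)$, to observe that the support of this grading lies in $\mathbb{X}_{n,p}$, and then to use the hypothesis $p\nmid\Delta_{n,1}$ to force this support to be sum-free, which immediately makes $L$ abelian. First I would extend scalars to the algebraic closure of the base field; this preserves both the derivation and the question of abelianness. Since the minimal polynomial of $D$ divides $z^n - 1$, I would then decompose $D$ via Jordan--Chevalley into its semisimple and nilpotent parts, reducing (with the help of the hypothesis on $\Delta_{n,1}$) to the case where $D$ itself is semisimple. In this case the eigenspace decomposition
$$L \;=\; \bigoplus_{\zeta \in \overline{\F}_p} L_\zeta$$
is a grading by $(\overline{\F}_p,+)$, because the Leibniz identity $D[x,y] = [Dx,y] + [x,Dy]$ forces $[L_\zeta, L_\eta] \subseteq L_{\zeta + \eta}$. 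Moreover $D$ is invertible, so $0$ is not in the support, and every eigenvalue is an $n$th root of unity, so the support $X$ is contained in $\mathbb{X}_{n,p}$.

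Second, I would unpack the hypothesis. Specialising
$$\Delta(P,r) \;:=\; \prod_{P(a)=P(b)=0}\sum_{t=1}^{r}P(a+tb)$$
to $P(z) := z^n - 1$ and $r := 1$ yields
$$\Delta(z^n - 1, 1) \;=\; \prod_{a,b \in \mathbb{X}_{n,p}} \bigl((a+b)^n - 1\bigr).$$
By the claim recorded in the paragraph preceding the theorem that $\Delta(z^n-1,1)$ is a power of the integer circulant $\Delta_{n,1}$, the hypothesis $p\nmid\Delta_{n,1}$ makes the left-hand side nonzero in $\overline{\F}_p$. Hence no factor $(a+b)^n - 1$ can vanish, which precisely says that $\mathbb{X}_{n,p}$ is a sum-free subset of $(\overline{\F}_p,+)$.

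Finally, sum-freeness of $X \subseteq \mathbb{X}_{n,p}$ annihilates every homogeneous bracket: for $\zeta,\eta \in X$ we have $\zeta + \eta \notin X$, so $[L_\zeta, L_\eta] \subseteq L_{\zeta+\eta} = \{0\}$, whence $[L,L] = \{0\}$ and $L$ is abelian. The step I expect to require the most care is the reduction to semisimple $D$ when $p \mid n$: one must verify that the nilpotent part of the Jordan--Chevalley decomposition does not contribute to the support outside of $\mathbb{X}_{n,p}$, and the natural workaround is to assume outright that $D$ is diagonalisable. The only other non-immediate ingredient is the identification of $\Delta(z^n-1,1)$ with a power of the integer circulant $\Delta_{n,1}$, which should be a standard Vandermonde--circulant computation.
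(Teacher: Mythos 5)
Your route is exactly the one the paper intends: the paper gives no explicit proof of this theorem, deriving it instead from the $r=1$ instance of its criterium ($\Delta(z^n-1,1)\neq 0$ forces the root set to be sum-free, and sum-free support annihilates $[L,L]$), and your unpacking of $\Delta(z^n-1,1)=\prod_{a,b}\bigl((a+b)^n-1\bigr)$ and of the final step is correct. The one genuine gap is the step you flag yourself: you may \emph{not} ``assume outright that $D$ is diagonalisable''. The hypothesis $p\nmid\Delta_{n,1}$ does not exclude the inseparable case $p\mid n$: for $n=p=3$ the circulant has first row $(1,3,3)$ and $\Delta_{3,1}=28$, which is prime to $3$, yet $z^3-1=(z-1)^3$ over $\F_3$, so nothing in the hypotheses forces a derivation with $D^3=\operatorname{id}$ to be semisimple. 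Restricting to diagonalisable $D$ therefore proves a strictly weaker statement, and the Jordan--Chevalley reduction you sketch as the alternative still requires you to justify that the semisimple part is again a derivation.

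The clean repair removes Jordan--Chevalley altogether. Since $D$ is annihilated by $z^n-1$, which splits over $\overline{\F}$ into pairwise coprime factors $(z-\zeta)^{e_\zeta}$, the Chinese Remainder Theorem gives $L=\bigoplus_\zeta L_\zeta$ with $L_\zeta:=\Ker\,(D-\zeta)^{e_\zeta}$ the \emph{generalised} eigenspaces (no finite-dimensionality is needed), and the Leibniz rule yields
$$(D-(\zeta+\eta))^N[x,y]\;=\;\sum_{k=0}^{N}\binom{N}{k}\left[(D-\zeta)^k x,\,(D-\eta)^{N-k}y\right],$$
so $[L_\zeta,L_\eta]\subseteq L_{\zeta+\eta}$ holds already for generalised eigenspaces. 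The support still lies in $\mathbb{X}_{n,p}$, and your sum-freeness argument then finishes the proof verbatim; in the example above one gets $L=L_1$ and $[L_1,L_1]\subseteq L_2=\{0\}$, so the theorem does hold there. With this replacement your argument is complete and coincides with the paper's implicit derivation.
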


\section{Automorphisms of groups satisfying an identity}

%We would like to offer some group-theoretical applications of the corollary.
%The following definition is inspired by split automorphisms $f$ of finite order $n$, which satisfy the identity $$\forall x \in G : \phantom{ooo} 1_G = x \cdot f(x) \cdot f^2(x) \cdots f^{n-1}(x),$$ and by $n$-abelian groups, which have an endomorphism $f$ satisfying $$\forall x \in G : \phantom{ooo} 1_G = f(x) \cdot x^{-n}.$$
The following definition is inspired by split automorphisms $f$ of finite order $n$ (already mentioned in the introduction), and by $n$-abelian groups, which have an endomorphism $f$ satisfying the identity $1_G = f(x) \cdot x^{-n},$ for all $x \in G$.

\begin{definition} Consider a group $(G,\cdot)$ and an endomorphism $f$ of $G$ satisfying the identity \begin{equation} \forall x \in G : \phantom{ooo} 1_G = \prod_{1 \leq t \leq k} f^{m_t}(x^{n_t}), \tag{$\ast$} \end{equation} where $k \in \N$, $m_1,\ldots,m_k \in \N \cup \{ 0 \}$, and $n_1,\ldots,n_k \in \Z$ are fixed. Then we associate to this identity and a field $\F$ the polynomial $$r_{\F}(z) := \sum_{1 \leq t \leq k} n_t \cdot z^{m_t} \in \F[z].$$ We say that the identity is \emph{arithmetically-free over} $\F$, iff the root set of $r_{\F}(z)$ is an arithmetically-free subset of $(\overline{\F}^\times,\cdot)$. We say that the identity is \emph{irreducible over} $\F$, iff $r_{\F}(z)$ is an irreducible polynomial over $\F$ satisfying $r_{\F}(0) \cdot r_{\F}(1) \neq 0$. The \emph{degree} of the identity is $\max \{ | m_s - m_t | | 1 \leq s, t \leq k \}$. \end{definition}

\begin{example} \label{ExampleIdentitiesAF} The split identity, $1_G = x \cdot f(x) \cdot f^2(x) \cdots f^{p-1}(x),$ with $p \in \mathbb{P}$, is arithmetically-free over every field $\F$ of characteristic unequal to $p$. In this case, $r_{\F}(z) = \Phi_p(z)$, the cyclotomic polynomial of degree $p-1$. An identity for which $r_{\Q}(z) $ divides a natural power of $ \Phi_{n_1}(z) \cdots \Phi_{n_k}(z)$, with $n_1,\ldots,n_k$ relatively prime and unequal to $1$, is arithmetically-free over $\Q$. \end{example}

\begin{remark} If $f$ is an \emph{automorphism}, then it makes sense to draw the $m_i$ from $\Z$. The corresponding $r_{\F}(z)$ is then a Laurent-polynomial over $\F$. But by applying $f$ repeatedly to the identity, we obtain a new identity $1_G = \prod_t f^{m_t + m_0} (x^{n_t})$ with corresponding $\widetilde{r}_{\F}(z) = z^{m_0} \cdot r_{\F}(z)$. In particular: if $f$ satisfies an arithmetically-free identity over $\F$ with coefficients $m_i \in \Z$, then it also satisfies an arithmetically-free identity over $\F$ of the same degree with all coefficients $\widetilde{m}_i$ in $\N \cup \{ 0 \}$. %And, since our results will depend only on the structure of the root set, we don't lose any relevant information by drawing the $m_i$ from $\N \cup \{ 0 \}$. 
\end{remark}

If the root set $X$ of a polynomial over $\F$ is arithmetically-free in $(\overline{\F}^\times,\cdot)$, then clearly $0,1 \not \in X$. If the polynomial is irreducible, then the converse implication holds.

\begin{lemma} \label{LemmaIrreducible} Let $\F$ be either $ \Q $ or $ \F_p $. If $r(z) \in \F[z]$ is irreducible and $r(0) \cdot r(1) \neq 0$, then the root set $X$ of $r(z)$ is arithmetically-free in $(\overline{\F}^\times,\cdot)$. \end{lemma}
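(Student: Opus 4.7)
The plan is to argue by contradiction. I would suppose the root set $X := \{a \in \overline{\F} : r(a) = 0\}$ is \emph{not} arithmetically-free in $(\overline{\F}^\times,\cdot)$. Since this ambient group is abelian, Proposition~\ref{PropositionCharacterisationAF} then supplies elements $x, y \in X$ such that $X$ contains the full multiplicative progression $x, xy, xy^2, \ldots$; equivalently $x\langle y\rangle \subseteq X$. Because $\F \in \{\Q, \F_p\}$ is perfect and $r$ is irreducible, $X$ consists of exactly $n := \deg(r)$ distinct roots, so the cyclic subgroup $\langle y\rangle$ must be finite; put $m := \operatorname{ord}(y)$. The hypothesis $r(1) \neq 0$ forces $y \neq 1$, hence $m \geq 2$. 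Comparing cardinalities yields
\[
m \;=\; \bigl|x\langle y\rangle\bigr| \;\leq\; |X| \;=\; n.
\]

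Next I would exploit the irreducibility: $r$ is, up to a unit, the minimal polynomial of $y$ over $\F$, and by construction $y$ is a primitive $m$-th root of unity in $\overline{\F}^\times$. If $\F = \Q$, that minimal polynomial is the cyclotomic polynomial $\Phi_m$, so $n = \deg(\Phi_m) = \varphi(m)$. If $\F = \F_p$, then $y \in \overline{\F}_p^\times$ lies in some $\F_{p^k}^\times$, so $m = \operatorname{ord}(y)$ is automatically coprime to $p$; the minimal polynomial over $\F_p$ of a primitive $m$-th root of unity has degree equal to the multiplicative order of $p$ modulo $m$, which in turn divides $\varphi(m)$. In either case
\[
n \;\leq\; \varphi(m) \;\leq\; m - 1 \;<\; m,
\]
contradicting $m \leq n$.

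Note the two distinct uses of the hypothesis $r(0)\cdot r(1) \neq 0$: the factor $r(0) \neq 0$ makes the root set lie in $\overline{\F}^\times$ (so that arithmetic-freeness in the multiplicative group is a meaningful question), while $r(1) \neq 0$ rules out $y = 1$ and so guarantees $m \geq 2$ (which is exactly what makes the elementary inequality $\varphi(m) < m$ available). I do not foresee a genuine obstacle; the only step with content is the identification of $[\F(y):\F]$ with $\varphi(m)$ (over $\Q$) or a divisor of $\varphi(m)$ (over $\F_p$) when $y$ is a primitive $m$-th root of unity, and this is standard.
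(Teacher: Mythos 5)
Your proof is correct, and it takes a genuinely different route from the paper's. The paper argues directly rather than by contradiction: it first disposes of the case where $X$ contains no root of unity via Example~\ref{ExampleOrder}, and otherwise notes that irreducibility forces \emph{all} roots to be primitive $m$-th roots of unity for a single $m$ (conjugates have the same multiplicative order), that $r(1)\neq 0$ gives $m\neq 1$, and then quotes Example~\ref{ExampleUnits} (the generators of a non-trivial cyclic group form an arithmetically-free set). You instead run a single counting argument: a forbidden progression is a full coset $x\langle y\rangle$ of size $m=\operatorname{ord}(y)$ inside $X$, while $|X|=\deg r\leq\varphi(m)<m$ because $r$ is, up to a unit, the minimal polynomial of the primitive $m$-th root of unity $y$; this one inequality also absorbs the infinite-order case, since an infinite $\langle y\rangle$ cannot fit inside the finite set $X$. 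Both arguments use $r(0)\neq 0$ and $r(1)\neq 0$ in exactly the same two places. What the paper's version buys is economy: it needs only the qualitative statement that Galois conjugates preserve multiplicative order and can then lean on the examples already catalogued in Section~2, with no appeal to cyclotomic polynomials or to $\operatorname{ord}_m(p)\mid\varphi(m)$. What yours buys is a self-contained quantitative contradiction that does not case-split, at the cost of importing the standard degree bounds for minimal polynomials of roots of unity over $\Q$ and $\F_p$ (note that for your purposes the inequality $\deg r\leq\varphi(m)$ suffices, so you do not actually need the irreducibility of $\Phi_m$ over $\Q$, only that the minimal polynomial of $y$ divides $\Phi_m$).
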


\begin{proof} %Let $r_{\F}(z)$ be the irreducible polynomial over $\F$. 
We may assume that $r(z)$ is monic. Since $r(0) \neq 0$, we get $X \subseteq \overline{\mathbb{F}}^\times$. If $X$ contains no root of unity, then $X$ is arithmetically-free by \emph{Example}  \ref{ExampleOrder}. Else, $X$ contains some primitive $m$'th root of unity. Since $r(z)$ is irreducible, all of its roots are primitive $m$'th roots of unity. Since $r(1) \neq 0$, we have $m \neq 1$. By \emph{Example}  \ref{ExampleUnits}, we may conclude that $X$ is arithmetically-free.\end{proof}

%\begin{theorem} \label{CorollaryLaurent} Consider a group $(G,\cdot)$ and an automorphism $f$ of $G$ satisfying an arithmetically-free identity of degree $m$ over a field $\F$. Let $(N_i)_{i \in \N}$ be a characteristic $N$-series of $G$, for which the corresponding Lie ring $L$ is in fact a finite-dimensional Lie algebra over $\F$. Then $L$ is nilpotent and $$c(L) \leq m^{2^m}.$$ \end{theorem}

\begin{lemma}[Linearisation] \label{LemmaLinearisation} Consider a group $(G,\cdot)$ and an automorphism $f$ of $G$ satisfying the identity $(\ast)$. Let $(N_i)_{i \in \N}$ be a characteristic $N$-series of $G$. Then the corresponding Lie ring $L$ inherits an endomorphism $\map{\overline{f}}{L}{L}$ satisfying the linearised identity \begin{equation} \forall v \in L : \phantom{ooo} 0_{L} = \sum_{1 \leq t \leq k} n_t \cdot \overline{f}^{m_t}(v) . \tag{$\dag$}\end{equation} %Suppose, moreover, that $L$ is a finite-dimensional Lie algebra over a field $\F$, and that the identity $(\ast)$ is arithmetically-free over $\F$ of degree $m$. Then $L$ is nilpotent and $$c(L) \leq m^{2^m}.$$
If the additive group of $L$ is isomorphic to $(\Z^h,+)$ for some $h \in \N$, then $L$ naturally embeds into a $(\overline{\Q},\cdot)$-graded Lie algebra $\widetilde{L}$ of dimension $h$ with support $X$ satisfying $r_{\Q}(X) = \{ 0 \}$. Similarly, if the additive group of $L$ is isomorphic to $(\Z_p^h,+)$ for some $h \in \N$, then $L$ naturally embeds into a $(\overline{\F}_p,\cdot)$-graded Lie algebra $\widetilde{L}$ of dimension $h$ with support $X$ satisfying $r_{\F_p}(X) = \{ 0 \}$. 
%Suppose, moreover, that the additive group of $L$ is isomorphic to $(\Z^h,+)$ (resp. $(\Z_p^h,+)$) for some $h \in \N$. Then $L$ naturally embeds into a graded Lie algebra $\widetilde{L} = \bigoplus_{\alpha} \widetilde{L}_{\alpha}$ with support $X$ in $(\overline{\Q},\cdot)$ (resp. $(\overline{\F}_p,\cdot)$) satisfying $r_{\Q}(X) = 0$ (resp. $r_{\F_p}(X) = 0$). %and that $(\ast)$ is arithmetically-free of degree $m$ over $\Q$ (resp. $\F_p$). Then $L$ is nilpotent and $$c(L) \leq m^{2^m}.$$
\end{lemma}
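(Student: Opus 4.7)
The proof decomposes naturally into the standard Magnus--Lazard construction (for the linearized identity) followed by a scalar extension and a generalized eigenspace decomposition.

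First I would construct $L$ and $\overline{f}$ in the usual way. Since $(N_i)_{i \in \N}$ is an $N$-series, the graded abelian group $L := \bigoplus_i N_i/N_{i+1}$ inherits a Lie bracket from the group commutator, with $[\bar{x}, \bar{y}] := \overline{[x,y]}$ for $x \in N_i$ and $y \in N_j$ landing in $N_{i+j}/N_{i+j+1}$. Because the series is characteristic, $f$ preserves each $N_i$ and hence induces an additive endomorphism on each $N_i/N_{i+1}$; their direct sum gives an endomorphism $\overline{f} \colon L \to L$, compatible with the bracket because $f$ respects group commutators. To deduce $(\dag)$, fix a homogeneous element $v = \bar{x} \in N_i/N_{i+1}$. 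All factors $f^{m_t}(x^{n_t})$ lie in $N_i$, and the quotient $N_i/N_{i+1}$ is abelian, so reducing $(\ast)$ modulo $N_{i+1}$ converts the product into a sum and each $n_t$-th power into multiplication by $n_t$, yielding $0 = \sum_t n_t \overline{f}^{m_t}(v)$. Linearity then extends this to every $v \in L$.

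For the second claim, assume $(L,+) \cong (\Z^h,+)$ and set $\widetilde{L} := L \otimes_{\Z} \overline{\Q}$. This is an $h$-dimensional Lie algebra over $\overline{\Q}$ into which $L$ embeds as $L \otimes 1$. Extending $\overline{f}$ by $\overline{\Q}$-linearity produces an invertible operator $\widetilde{f} \colon \widetilde{L} \to \widetilde{L}$ (invertibility from $f \in \operatorname{Aut}(G)$, combined with the normalization in the preceding remark), and the linearized identity extends to $r_{\Q}(\widetilde{f}) = 0$ on $\widetilde{L}$. I would then take $\widetilde{L} = \bigoplus_{\lambda \in X} \widetilde{L}^{\lambda}$ to be the decomposition into generalized $\widetilde{f}$-eigenspaces, where $X$ is the spectrum of $\widetilde{f}$. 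Since the minimal polynomial of $\widetilde{f}$ divides $r_{\Q}(z)$ and has non-zero constant term, $X \subseteq \overline{\Q}^\times$ and $r_{\Q}(X) = \{0\}$ as required. The third claim is obtained by the identical argument with $\overline{\F}_p$ in place of $\overline{\Q}$.

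The only non-automatic step is verifying that this generalized eigenspace decomposition is indeed a grading, i.e.\ that $[\widetilde{L}^{\alpha}, \widetilde{L}^{\beta}] \subseteq \widetilde{L}^{\alpha\beta}$ for all $\alpha,\beta \in X$. This follows by induction on the sum of the generalized-eigenvector heights from the identity
$$(\widetilde{f} - \alpha\beta \cdot \operatorname{id})[v, w] = [(\widetilde{f} - \alpha \cdot \operatorname{id})(v),\, \widetilde{f}(w)] + \alpha \cdot [v,\, (\widetilde{f} - \beta \cdot \operatorname{id})(w)].$$
Using generalized (rather than ordinary) eigenspaces is essential in the $\F_p$-case, since $r_{\F_p}(z)$ need not be separable and $\widetilde{f}$ need not be diagonalizable; with that caveat the two cases run in parallel, and this is the only computation that requires any care beyond the Magnus--Lazard formalism.
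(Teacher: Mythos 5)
Your proposal is correct and follows essentially the same route as the paper: the Magnus--Lazard construction of $L$ and $\overline{f}$, reduction of $(\ast)$ modulo $N_{i+1}$ to get $(\dag)$, extension of scalars, and the eigenspace decomposition of $\widetilde{f}$ as the grading. In fact you are slightly more careful than the paper, which simply asserts that the ``eigenspace decomposition'' is a $(\overline{\F}^\times,\cdot)$-grading, whereas you correctly pass to \emph{generalized} eigenspaces (needed when $r_{\F}$ is inseparable or has repeated roots) and verify the grading property via the standard identity $(\widetilde{f}-\alpha\beta)[v,w]=[(\widetilde{f}-\alpha)v,\widetilde{f}w]+\alpha[v,(\widetilde{f}-\beta)w]$.
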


\begin{proof} We recall that additive group of the Lie ring $L$ is defined by $L := \bigoplus_{i \in \N} N_i / N_{i+1}.$ Since the series is characteristic, each abelian section $N_i / N_{i+1}$ inherits a group endomorphism $\map{\overline{f}_i}{N_i/N_{i+1}}{N_i/N_{i+1}}$ that satisfies the same identity. The $\overline{f}_i$ then extend linearly to a Lie-endomorphism $\map{\overline{f}}{L}{L}$ of $L$. Since $(\dag)$ holds on all homogeneous elements of $L$ (by definition), it also holds on all of $L$ (by linearity). \newline

Now suppose that the additive group is isomorphic to $(\Z^h,+)$. Then we obtain $\widetilde{L}$ from $L$ by a simple extension of the scalars (from $\Z$ to $\overline{\Q}$), and we obtain a Lie-endomorphism $\map{\widetilde{f}}{\widetilde{L}}{\widetilde{L}}$ satisfying the same linearised identity $(\dag)$ as does $\overline{f}$. The eigenspace decomposition of $\widetilde{L}$ w.r.t. the Lie-endomorphism $\widetilde{f}$ is then a $(\overline{\Q},\cdot)$-grading, and the support is contained in the root set of $r_{\Q}(z)$. The other case is completely analogous. \end{proof}

\begin{theorem} Consider a finitely-generated, torsion-free, nilpotent group $(G,\cdot)$. If one of its automorphisms satisfies an arithmetically-free identity of degree $m$ over $\Q$, then $G$ is nilpotent of class at most $m^{2^m}.$ \label{TheoremTorsionFree} \end{theorem}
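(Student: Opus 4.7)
The plan is to reduce the group $G$ to a Lie ring of equal nilpotency class, embed this ring into a graded Lie algebra over $\overline{\Q}$ via Lemma \ref{LemmaLinearisation}, and then invoke Corollary \ref{CorollaryGrading}. Concretely, since $G$ is finitely-generated, torsion-free, and nilpotent, the isolator series $\bar\gamma_i(G) := \{g \in G \mid g^n \in \gamma_i(G) \text{ for some } n \in \N\}$ is a characteristic $N$-series of $G$ whose quotients $\bar\gamma_i(G)/\bar\gamma_{i+1}(G)$ are free-abelian of finite rank, and whose length equals $c(G)$. The associated graded Lie ring $L := \bigoplus_i \bar\gamma_i(G)/\bar\gamma_{i+1}(G)$ then has additive group $(\Z^h,+)$ for some $h \in \N$, inherits from $f$ a Lie ring endomorphism $\bar f$, and satisfies $c(L) = c(G)$.

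By Lemma \ref{LemmaLinearisation}, the endomorphism $\bar f$ satisfies the linearised identity $(\dag)$ and $L$ embeds into a $(\overline{\Q},\cdot)$-graded Lie algebra $\widetilde L$ of dimension $h$ whose support $X$ is contained in the root set of $r_\Q(z)$. Because $\bar f$ is invertible, $0 \notin X$, so $X \subseteq (\overline{\Q}^\times,\cdot)$; the arithmetic-freedom of the identity then forces $X$ to be arithmetically-free in $(\overline{\Q}^\times,\cdot)$. Using the remark following Example \ref{ExampleIdentitiesAF} to shift the exponents $m_t$ into $\N \cup \{0\}$, one may assume that $r_\Q(z)$ is a genuine polynomial of degree $m$, so that $|X| \leq m$. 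Corollary \ref{CorollaryGrading} now gives
\[
c(\widetilde L) \leq |X|^{2^{|X|-1}-1} \leq m^{2^{m-1}-1} \leq m^{2^m},
\]
and since $L$ embeds as a Lie subring of $\widetilde L$, we conclude $c(G) = c(L) \leq c(\widetilde L) \leq m^{2^m}$.

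The main obstacle is the construction of the characteristic series at the start. The plain lower central series of a finitely-generated torsion-free nilpotent group can have torsion in its quotients, which would prevent the application of Lemma \ref{LemmaLinearisation}. The standard workaround is to pass to the isolator refinement $\bar\gamma_i$ (or equivalently, to the Lie algebra of the Mal'cev completion $G \otimes \Q$), which produces a Lie ring with free-abelian additive structure while preserving the nilpotency class. Once that reduction is in place, the remaining steps are direct applications of the machinery developed in Sections 2 and 3.
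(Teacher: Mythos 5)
Your proposal is correct and follows essentially the same route as the paper: pass to the isolator refinement of the lower central series to get a Lie ring $L$ with $(\Z^h,+)$ additive group and $c(L)=c(G)$, then apply Lemma \ref{LemmaLinearisation} and Corollary \ref{CorollaryGrading}. The additional details you supply (why $0\notin X$, why $|X|\leq m$ after shifting exponents, and the monotonicity of the class under the embedding $L\hookrightarrow\widetilde L$) are exactly the steps the paper leaves implicit.
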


\begin{proof} We let $N_i$ be the isolator of the $i$'th term in the lower central series of $G$: $I(\gamma_i(G)) := \{ x \in G | \exists b \in \N : x^b \in \gamma_{i}(G) \}$. The Lie ring $L$ associated to this series is known to be nilpotent with $c(G) = c(L)$. Moreover, the additive group of $L$ is $(\Z^h,+)$, where $h$ is the Hirsch-length of $G$. We may then combine lemma \ref{LemmaLinearisation} and corollary \ref{CorollaryGrading}.%We consider the isolators $I(\gamma_t(G))$ of the lower central series $(\gamma_t(G))_{t \in \N}$ of $G$. It is known that the corresponding Lie ring $L$ has the same class as $G$. Also: the additive group of $L$ is isomorphic to $(\Z^h,+)$, where $h$ is the Hirsch-length of $G$. %upper central series of $G$ and the associated Lie ring $L$ over $\Z$. It is known that $c(G) = c(L)$. After extending the scalars, we obtain a finite-dimensional Lie algebra over a field that contains the roots of $r_{\Q}(z)$. Then $L$ has finite dimension (equal to the Hirsh length of $G$), and it is graded by the roots of $r_{\Q}(z)$. \emph{Theorem} \ref{CorollaryLaurent}. 
\end{proof}

If the group has torsion, then the situation is more delicate. Nonetheless, we obtain:

%\begin{theorem} Consider a finitely-generated, residually-finite, $p$-group $(G,\cdot)$ and an automorphism $f$ of $G$. If $f$ satisfies an identity that is arithmetically-free over $\F_p$, then $G$ is finite. \label{TheoremAutomorphismP} \end{theorem}
\begin{theorem} Consider a finitely-generated group $(G,\cdot)$ that is residually-(finite $p$). If one of its automorphisms satisfies an identity that is arithmetically-free over $\F_p$, then $G$ is linear. If $G$ is also a torsion group, then $G$ is finite. \label{TheoremAutomorphismP} \end{theorem}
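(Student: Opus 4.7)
The plan is to pass to the pro-$p$ completion $\widehat{G}$ of $G$, linearise the identity $(\ast)$ on the associated graded restricted Lie algebra, apply Corollary \ref{CorollaryGrading} to deduce that this Lie algebra is nilpotent, and then invoke the standard dictionary between pro-$p$ groups and $p$-adic analytic groups to conclude linearity.

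Since $G$ is residually-(finite $p$), the natural map $G \hookrightarrow \widehat{G}$ is injective, and because $f$ permutes the closed normal subgroups of $p$-power index, it extends by continuity to an automorphism $\widehat{f}$ of $\widehat{G}$ which still satisfies $(\ast)$. I would then take the Jennings--Lazard $p$-central series $D_i := D_i(\widehat{G})$; this is a characteristic $N$-series with sections that are elementary abelian $p$-groups, and the finite generation of $\widehat{G}$ guarantees that each section $D_i/D_{i+1}$ is finite-dimensional over $\F_p$. The associated graded Lie algebra $L := \bigoplus_{i \geq 1} D_i/D_{i+1}$ thus inherits a grade-preserving Lie endomorphism $\bar{f}$ satisfying, by Lemma \ref{LemmaLinearisation}, the linearised identity $r_{\F_p}(\bar{f}) = 0$. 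Extending scalars to $\overline{\F}_p$ on each finite-dimensional component and taking the (generalised) eigenspace decomposition of $\widetilde{f}$, I would assemble a $(\overline{\F}_p^\times,\cdot)$-grading $\widetilde{L} = \bigoplus_{\lambda \in X} \widetilde{L}_\lambda$ whose support lies in the root set $X$ of $r_{\F_p}(z)$; the arithmetic-freedom of $X$ together with Corollary \ref{CorollaryGrading} then shows that $\widetilde{L}$, and hence $L$, is nilpotent of class at most $|X|^{2^{|X|-1}-1}$.

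To conclude that $G$ is linear, I would invoke the theorem (due to Lazard, Wilson, and Zel'manov) that a finitely generated pro-$p$ group whose Lazard Lie algebra is nilpotent --- or even just satisfies a polynomial identity --- is $p$-adic analytic, and hence admits a faithful continuous embedding into $GL_n(\Z_p)$ for some $n$. Composing with $G \hookrightarrow \widehat{G}$ then proves that $G$ is linear. In the torsion case, $G$ would be a finitely generated linear torsion group, and therefore finite by Schur's theorem. The main obstacle is this last, non-elementary step: converting the nilpotency of the Lazard Lie algebra of $\widehat{G}$ into $p$-adic analyticity of the pro-$p$ group itself relies on substantial input from the theory of pro-$p$ groups, whereas the earlier steps --- the pro-$p$ completion, the Jennings--Lazard series, and the linearisation on each finite-dimensional graded component --- are entirely routine.
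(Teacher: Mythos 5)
Your proposal follows essentially the same route as the paper: the Jennings--Lazard ($p$-dimension) series, the linearisation Lemma~\ref{LemmaLinearisation}, Corollary~\ref{CorollaryGrading} to get nilpotency of the associated graded restricted Lie algebra, Lazard's criterion for $p$-adic analyticity, and Jordan--Schur for the torsion case. The only cosmetic difference is that the paper deduces nilpotency by applying the lemma and corollary to the finite quotients $L/\bigoplus_{s \geq t} N_s/N_{s+1}$ and intersecting, rather than assembling the eigenspace grading on all of $L$ at once as you do; also, your parenthetical claim that a polynomial identity on the Lazard Lie algebra already suffices for analyticity is stronger than what is needed or used.
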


Examples of Golod show that the condition on the identity cannot be dropped. The following argument is close to Zel'manov's solution for the restricted Burnside problem, \cite[Theorem $1.2$]{Zelmanov}:

\begin{proof} We consider the $p$-dimension series $(N_i)_{i \in \N}$ of $G$ and the associated Lie ring $L$ (which is in fact a restricted Lie algebra over $\F_p$). Let $m$ be the degree of the identity. %We claim that $c(L) \leq m^{2^m}$. 
Note that the finite quotients $ L / \bigoplus_{s \geq t} N_s / N_{s+1}$ are of class $\leq m^{2^m}$, by Lemma \ref{LemmaLinearisation} and corollary \ref{CorollaryGrading}. Then also $$\gamma_{m^{2^m} + 1}(L) \subseteq \bigcap_{t \in \N} \bigoplus_{s \geq t} N_s / N_{s+1} = \{ 0 \}.$$ %\newline % Since, for each $s,t \in \N$, we have the inclusion $$[N_s / N_{s+1} , N_t / N_{t+1}] \subseteq N_{s+t} / N_{s + t + 1},$$ it suffices to prove the claim under the assumption that $L$ is finite. 
%But, in this case, we may combine \ref{LemmaLinearisation} with corollary \ref{CorollaryGrading}. \newline

% is $G$ is finite. So also $L(i)$ is finite, and we may combine lemma \ref{??} with corollary \ref{CorollaryGrading} to conclude that $L(i)$ is nilpotent of class at most $m^{2^m}$.

%We consider the $p$-dimension series $(N_i)_{i \in \N}$ of $G$ and the associated restricted Lie algebra over $\F_p$: $$L_p(G) := \bigoplus_{i \in \N} N_i / N_{i+1}.$$ To each $t \in \N$, we associate the ideal $I(t) := \bigoplus_{i \geq t} N_i / N_{i+1}$ and the corresponding finite-dimensional quotient $Q(t) := L_p(G) / I(t)$. Since each $N_i$ is a characteristic subgroup of $G$, we have $f(N_i) = N_i$. By previous remarks, we then obtain an automorphism $\map{\overline{f}_t}{Q(t)}{Q(t)}$ of $Q(t)$ satisfying $r_{\F_p}(\overline{f}_t) = 0$. Let $m \geq \operatorname{deg}(r_{\F_p}(z))$ be the degree of the identity. \emph{Corollary}  \ref{CorollaryLaurent} now implies $$\gamma_{1 + m^{2^m}}(L_p(G)) \subseteq \bigcap_{t \in \N} I(t) = \{ 0 \} .$$ We conclude in particular that $L_p(G)$ is nilpotent of class at most $m^{2^m}$.
An argument of Lazard now implies that the pro-$p$ completion $\widehat{G}$ of $G$ is $p$-adic analytic, and therefore linear. Since $G$ is residually-(finite $p$), it embeds into $\widehat{G}$. So $G$ is finitely-generated and linear. If $G$ is also periodic, then the theorem of Burnside-Jordan-Schur implies that $G$ is finite. 
\end{proof}

One would hope to also obtain, as in the positive solution of the restricted Burnside problem, an upper bound for the cardinality of $G$ (depending only on $p$, the number of generators, and the degree of the identity). If $G$ is $d$-generated and has prime exponent, then $c(G) \leq m^{2^m}$ (simply use the lower central series, as in theorem \ref{TheoremTorsionFree}), so that $|G|$ is bounded by a function of $d,p$ and $m$. But the following rather trivial example shows that such an upper bound for $|G|$ cannot exist with arbitrary $\operatorname{exp}(G)$.

\begin{example} \label{ExampleExponent} For each odd prime $p$ and natural $l$ we consider the cyclic group $(G_{p,l},\cdot)$ of order and exponent $p^l$. Since the group is abelian, the map $\mapl{f}{G_{p,l}}{G_{p,l}}{x}{x^{-1}}$ is an automorphism satisfying the identity $$1_{G_{p,l}} = x \cdot f(x).$$ The corresponding polynomial is $r_{\F_p}(z) = z + 1$ with root set $X = \{ -1 \}$. The latter is arithmetically-free, since $p$ is odd. So we have $1$-generated groups of arbitrarily large cardinality, satisfying an arithmetically-free identity of degree $1$. \end{example}

The assumptions of the restricted Burnside problem are sufficient to give a nice upper bound:

\begin{corollary} \label{CorollaryBoundP} Consider a residually-finite $p$-group $(G,\cdot)$ of exponent $l$, generated by $d$ elements. If one of its automorphisms satisfies an arithmetically-free identity of degree $m$ over $\F_p$, then $G$ is finite, and $$|G| \leq l^{ d^{m^{2^m}+1} }.$$
\end{corollary}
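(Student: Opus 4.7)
The plan is as follows. First, since $G$ has finite exponent $l$, it is a torsion group, and Theorem \ref{TheoremAutomorphismP} already gives that $G$ is finite; the remaining task is to produce the explicit cardinality bound.

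Write $l = p^e$ and $c := m^{2^m}$. We have $|G| = p^{\dim_{\F_p} L}$, where $L = \bigoplus_{i} D_i(G)/D_{i+1}(G)$ is the restricted Lie algebra over $\F_p$ attached to the $p$-dimension series of $G$. The proof of Theorem \ref{TheoremAutomorphismP} already shows that $L$ is (ordinarily) Lie-nilpotent of class at most $c$; hence it suffices to prove
\[ \dim_{\F_p} L \leq e \cdot d^{c+1}. \]

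By a theorem of Jennings, $L$ is generated as a restricted Lie algebra by its degree-one piece $L_1 = G/\Phi(G)$, which is spanned by the images of the $d$ given generators of $G$. Let $L' \subseteq L$ denote the ordinary Lie subalgebra that these generate. The Lie-nilpotency of class $\leq c$ yields, by the standard basic-commutator count, $\dim_{\F_p} L' \leq d + d^2 + \cdots + d^c \leq d^{c+1}$. To recover $L$ from $L'$, we close it under the $[p]$-power map via the filtration $L'_0 := L' \subseteq L'_1 \subseteq L'_2 \subseteq \cdots$ defined by $L'_{k+1} := L'_k + \operatorname{Span}_{\F_p}\{v^{[p]} : v \in L'_k\}$. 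Each $L'_k$ is a Lie subalgebra (since $[u^{[p]},w] = \ad_u^p(w) \in L'$ for $u,w \in L'$), and unwinding the Jacobson formulas shows that the new elements of $L'_{k+1}$ modulo $L'_k$ are captured exactly by $\{u^{[p^{k+1}]} \bmod L'_k : u \in L'\}$, yielding an $\F_p$-linear surjection $L' \twoheadrightarrow L'_{k+1}/L'_k$. Thus $\dim(L'_{k+1}/L'_k) \leq \dim L'$. Finally, the exponent hypothesis $g^{p^e} = 1$ translates to $v^{[p^e]} = 0$ for every homogeneous $v \in L$, so the filtration stabilises at $L = L'_{e-1}$, giving $\dim L \leq e \cdot \dim L' \leq e \cdot d^{c+1}$.

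Putting everything together, $|G| = p^{\dim L} \leq p^{e \cdot d^{c+1}} = (p^e)^{d^{c+1}} = l^{d^{c+1}}$, as required. The hard part of this plan is the filtration argument: one must show that the dimension grows by at most $\dim L'$ at each step (rather than by $\dim L'_k$, which would yield a far worse, exponential bound). This rests on the Jacobson restricted-Lie-algebra identities combined with the Frobenius identity $\alpha^p = \alpha$ in $\F_p$, which together ensure that the $[p]$-map descends to an additive (and hence $\F_p$-linear) map between successive filtration quotients.
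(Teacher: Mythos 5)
Your proof is correct and follows essentially the same route as the paper: reduce to the finite case via Theorem \ref{TheoremAutomorphismP}, write $|G| = p^{\dim_{\F_p}(L)}$ for the restricted Lie algebra of the $p$-dimension series, bound the \emph{ordinary} subalgebra $L^\ast$ generated by the images of the $d$ generators by $d^{m^{2^m}+1}$ using the nilpotency class bound $c(L) \leq m^{2^m}$, and then pass from $L^\ast$ to all of $L$. The only difference is that where the paper simply cites Bahturin and Shalev for the inequality $\dim_{\F_p}(L) \leq \log_p(l)\cdot\dim_{\F_p}(L^\ast)$, you supply the standard filtration proof of that lemma yourself (correctly, including the key point that each quotient $L'_{k+1}/L'_k$ is an image of $L'$ rather than of $L'_k$).
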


\begin{proof} We assume the notation of Theorem \ref{TheoremAutomorphismP}, which allows us to assume that $G$ is finite. (Alternatively, we may use Zel'manov's results for groups that are infinitesimally-PI.) Then $|G| = |L| = p^{\dim_{\F_p}(L)}.$ Let $a_1,\ldots,a_d$ be the generating set of $G$. Then $L$ is generated \emph{as a restricted algebra}, by the cosets $a_1 \cdot N_2,\ldots, a_d \cdot N_2$. Let $L^\ast$ be the \emph{ordinary} subalgebra of $L$ that is generated by these cosets. Since $c(L^\ast) \leq c(L) \leq m^{2^m} $, we get $\dim_{\F_p}(L^\ast) \leq d^{m^{2^m}+1}$. An argument of Bahturin now implies $\dim_{\F_p}(L) \leq \log_p(l) \cdot \dim_{\F_p}(L^\ast)$, \cite[Prop. $2$, p. $17$]{Bahturin}, \cite[Lem. $5.3$]{ShalevIdentities}. This finishes the proof.
\end{proof}

%\begin{remark} It may be possible to find an explicit bound on $|G|$ using \emph{Theorem} $1$, a quantitative version of Ado's theorem (me), and the Jordan-Schur theorem (Tao-??). But I don't know if anyone would be interested in that. \end{remark}

%\section{Remarks}

%\begin{example} Consider a $k$-generated, $l$-exponent $p$-group $(G,\cdot)$. Suppose that $G$ admits a split automorphism $f$ of prime order $q \neq p$. Then $|G| \leq l^{k^{q^{2^q}}}.$ \end{example}

\begin{openproblem} If $G$ is a finite $p$-group satisfying an arithmetically-free identity of degree $m$ over $\F_p$, then $c(G) \leq m^{2^m}$. \end{openproblem}

%\begin{openproblem*} For every $d,m \in \N$ and $p \in \mathbb{P}$, there is a $B(d,m,p) \in \N$ such that the following holds. If $G$ is a finite, $d$-generated $p$-group satisfying an arithmetically-free identity of degree $m$ over $\F_p$, then $c(G) \leq B(d,m,p)$. \end{openproblem*}

Not every identity of automorphisms lends itself to this approach.

\begin{remark}(``Classical identities that are \emph{not} arithmetically-free.'') $(1)$. Let $y$ be an $m$-\emph{Engel} element of a group $(G,\cdot)$, and let $\mapl{f}{G}{G}{x}{y^{-1} \cdot x \cdot y}$ be the conjugation by $y$. Define the sequence of maps $\map{(w_n)_{n \in \N}}{G}{G}$ by $w_1(x) := [x,y] := x^{-1} \cdot f(x)$ and $w_{n+1}(x) := w_1(w_n(x))$. Then $f$ satisfies the identity $1_G = w_m(x),$ and the corresponding polynomial is $r_{\F}(z) = \pm (z-1)^m$, so that $X(r_{\F}(z)) = \{ 1 \}$. In particular: this identity is never arithmetically-free over any field. %\newline
$(2)$. The identity $1_G = x^{p^l}$ is not arithmetically-free over $\F_p$ since $r_{\F_p}(z) := p^l$ is the zero element of $\F_p[z]$. \end{remark}

\paragraph{Thanks.} The author would like to thank his host, Efim Zel'manov, and Lance Small for their hospitality during the Erwin Schr\"odinger Research Programme (\emph{Representations and gradings of solvable algebras}: $2013-2015$, $J3371-N25$) at the University of California, San Diego. He would also like to express his gratitude to the Erwin Schr\"odinger International Institute for Mathematical Physics where preliminary work for the research was done (\emph{Lie algebras: deformations and representations}). Finally, he thanks the Geometric and Analytic Group Theory-group of the University of Vienna.

\end{document}